\documentclass[12pt]{amsart}
\setlength{\oddsidemargin}{.8cm}
\setlength{\evensidemargin}{.8cm} \setlength{\marginparsep}{1mm}
\setlength{\marginparwidth}{.6cm} \setlength{\textwidth}{15.5cm}
\setlength{\topmargin}{-1.3cm}
\setlength{\textheight}{24cm}
\setlength{\headheight}{.15in}

\usepackage{amssymb}
\usepackage{amscd}
\usepackage{amsmath}
\usepackage{enumerate}
\usepackage[all]{xy}
\usepackage{verbatim}

\newtheorem{theo}{Theorem}[section]
\newtheorem{lemma}[theo]{Lemma}
\newtheorem{cor}[theo]{Corollary}
\newtheorem{prop}[theo]{Proposition}

\newtheorem{remark}[theo]{Remark}

\def\e{{\epsilon}}

\newcommand{\B}{{\mathbb{B}}}
\newcommand{\C}{{\mathbb{C}}}
\newcommand{\R}{{\mathbb{R}}}

\newcommand{\BS}{\mathbb{S}}
\newcommand{\D}{\mathbb{D}}
\newcommand{\Z}{\mathbb{Z}}

\begin{document}
\title{ON THE L\^E-MILNOR FIBRATION FOR REAL ANALYTIC MAPS}
\author{Aur\'elio Menegon Neto}
\author{Jos\'e Seade}

\thanks{This work was realized with support from CNPq, ``Conselho Nacional de Desenvolvimento Cient\'ifico e Tecnol\'ogico" - Brazil}

\address{Aur\'elio Menegon Neto: Departamento de Matem\'atica - Universidade Federal da Para\'iba - Brasil.}
\email{aurelio@mat.ufpb.br}

\address{Jos\'e Seade: Instituto de Matem\'aticas - Universidad Nacional Aut\'onoma de M\'exico.}
\email{jseade@im.unam.mx}

\begin{abstract}
In this paper, we study the topology of real analytic map-germs with isolated critical value $f: (\R^m,0) \to (\R^n,0)$, with $1 <n <m$. We compare the topology of $f$ with the topology of the compositions $\pi_i^* \circ f$, where $\pi_i^*: \R^n \to \R^{n-1}$ are the projections $(t_1, \dots, t_n) \mapsto (t_1, \dots, t_{i-1}, t_{i+1}, \dots, t_n)$, for $i=1, \dots, n$. As a main result, we give necessary and sufficient conditions for $f$ to have a L\^e-Milnor fibration in the tube.
\end{abstract}
\maketitle

\section{Introduction}  \label{section_1}

In \cite{Mi} J. Milnor pointed out that if $f: (\R^m,0) \to (\R^n,0)$, with $1< n <m$, is a real analytic map-germ with an isolated critical point and if $\pi_i^*: \R^n \to \R^{n-1}$ is the projection obtained by removing the $i$-th coordinate, then the composition: 
$$f_i^* := \pi_i^* \circ f: (\R^m,0) \to (\R^{n-1},0)$$
also has an isolated critical point. He conjectured that the Milnor fiber associated to this composition is homeomorphic to the product of the Milnor fiber of $f$ and the closed unit interval.

In his Ph.D. thesis \cite{Ki}, H.C. King proved Milnor's conjecture. In fact, he proved that the Milnor fiber of $\pi_i^* \circ f$ is diffeomorphic to the product of the Milnor fiber of $f$ and the closed unit interval (with corners rounded off).

Later, N. Dutertre and R.N.A. dos Santos extended this result for real analytic map-germs $f$ with an isolated critical value and with Milnor's condition $(b)$. As defined by Massey in \cite{Ma}, we say that $f$ satisfies Milnor's condition $(b)$ if
$0$ is isolated in $V \cap \overline{\Sigma_{f,\rho} \backslash V}$, where $\rho$ is the square of the distance function to the origin and $\Sigma_{f,\rho}$ is the set of critical points of the pair $(f, \rho)$. Massey showed that any real analytic map-germ $f$ with an isolated critical value and with Milnor's condition $(b)$ has a L\^e-Milnor fibration:
$$f_|: f^{-1}(\B_\eta^n \backslash \{0\}) \cap \B_\e^m \to (\B_\eta^n \backslash \{0\}) \cap \Im(f) \, .$$
One can easily check that if $f$ has an isolated critical point at $0 \in \R^m$, then $f$ satisfies Milnor's condition $(b)$.

N. Dutertre and R.N.A. dos Santos proved in \cite{DS} that if $f: (\R^m,0) \to (\R^n,0)$ has an isolated critical value and satisfies Milnor's condition $(b)$, then $f_i^*: (\R^m,0) \to (\R^{n-1},0)$ also has an isolated critical value and Milnor's condition $(b)$, and the Milnor fiber of $f_i^*$ is homeomorphic to the product of the Milnor fiber of $f$ and the closed unit interval.

The idea of composing a map-germ $f$ with a projection to study the topology of $f$ was also used by J.L. Cisneros-Molina, J. Seade and J. Snoussi in several papers (\cite{CSS1}, \cite{CSS2} and \cite{CSS3}). 

In this paper, we use the idea of composing a real analytic map-germ $f$ with the projections $\pi_i^*$ to study the existence of a L\^e-Milnor fibration in a tube.

For any positive real number $r>0$ let $\B_r^n$ denote the closed ball around $0$ in $\R^n$ of radius $r$. Our main theorem is:

\begin{theo} \label{theo_main}
Let $f: (\R^m,0) \to (\R^n,0)$, with $1<n<m$, be a real analytic map-germ with an isolated critical value at $0 \in \R^n$. If $\e>0$ is small enough, then the following conditions are equivalent:
\medskip
\begin{itemize}
\item[$(i)$] The restriction:
$$f_|: f^{-1}(\B_\eta^n \backslash \{0\}) \cap \B_\e^m \to \B_\eta^n \backslash \{0\} \cap \Im(f)$$
is the projection of a smooth locally trivial fibration, where $\Im(f)$ is the image of $f$.
\medskip
\item[$(ii)$] For any $i=1, \dots, n$ and for any $t \in (\B_\eta^n \backslash \{0\}) \cap \Im(f)$ with $t_i^* \neq 0$, the restrictions:
\medskip
\begin{itemize}
\item[$-$] $(f_i^*)_|: (f_i^*)^{-1}(\B_\eta^{n-1} \backslash \{0\}) \cap \B_\e^m \to (\B_\eta^{n-1} \backslash \{0\}) \cap \Im(f_i^*)$
\item[$-$] $(f_i)_|: (f_i)^{-1}(\B_\eta^1 \backslash \{0\}) \cap \B_\e^m \to (\B_\eta^1 \backslash \{0\}) \cap \Im(f_i)$
\item[$-$] $(f_i)_| : (f_i)^{-1}(\B_\eta^1) \cap (f_i^*)^{-1}(t_i^*) \cap \B_\e^m \to \B_\eta^1 \cap \Im(f_{i |})$
\end{itemize}
\medskip
are projections of smooth locally trivial fibrations, where $t_i^* := \pi_i^*(t)$.
\medskip
\item[$(iii)$] The diffeomorphism type of $(f_i)^{-1}(t_i) \cap \B_\e^m$ is independent of $i=1, \dots, n$ and of $t_i \in (\B_\eta^1 \backslash \{0\}) \cap \Im(f_i)$, and it is the same diffeomorphism type of $(f^{-1}(t) \cap \B_\e^m) \times \B_\eta^{n-1}$, for any $t \in (\B_\eta^n \backslash \{0\}) \cap \Im(f)$ with $t_i \neq 0$.
\medskip
\item[$(iv)$] The Euler characteristic of $f^{-1}(t) \cap \BS_\e^{m-1}$ is constant, for any $t \in (\B_\eta^n \backslash \{0\}) \cap \Im(f)$.
\end{itemize}
\end{theo}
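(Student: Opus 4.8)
The plan is to prove the equivalences through the two strands $(i) \Leftrightarrow (ii)$ and $(i) \Rightarrow (iii) \Rightarrow (iv) \Rightarrow (i)$. Throughout I write $f = (f_i, f_i^*)$, so that for a target point $t = (t_i, t_i^*)$ the level set factors as $f^{-1}(t) = f_i^{-1}(t_i) \cap (f_i^*)^{-1}(t_i^*)$; this identifies the fibers of the third map in $(ii)$ with the Milnor fibers $f^{-1}(t) \cap \B_\e^m$ of $f$. The backbone of the argument is Massey's theorem: for $\e$ small, condition $(i)$ is equivalent to Milnor's condition $(b)$ for $f$. I would first record the (essentially standard) converse to Massey's result, namely that a smooth locally trivial fibration in the tube forces $0$ to be isolated in $V \cap \overline{\Sigma_{f,\rho} \backslash V}$, so that $(i)$ and condition $(b)$ may be used interchangeably; this reformulation is what will let me re-enter $(i)$ from the purely numerical hypothesis $(iv)$.

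For $(i) \Leftrightarrow (ii)$ the idea is a decomposition-and-reconstruction of the fibration over the product target $\R^n = \R \times \R^{n-1}$ determined by the splitting $f=(f_i,f_i^*)$. Assuming $(i)$, I would take a controlled, corner-respecting vector field trivializing $f$ over $\B_\eta^n \backslash \{0\}$, as furnished by Ehresmann's theorem for manifolds with corners (the corner being $f^{-1}(\cdot) \cap \BS_\e^{m-1}$), and push it forward under $\pi_i^*$ and restrict it to the slices $\{t_i^* = \mathrm{const}\}$ and $\{t_i = \mathrm{const}\}$ to obtain trivializing fields for the three restrictions in $(ii)$; the only delicate points are transversality to $\BS_\e^{m-1}$ for $\e$ small, which holds by the isolated critical value hypothesis, and compatibility with the corner. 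Conversely, given the three fibrations in $(ii)$, I would reconstruct the fibration of $f$ locally: around any $t \neq 0$ I choose $i$ with $t_i^* \neq 0$ (possible since $n \ge 2$), use the $f_i^*$-fibration to trivialize the $t_i^*$-directions and the restricted $f_i$-fibration — legitimate over all of $\B_\eta^1$, including $t_i=0$, precisely because $t_i^* \neq 0$ keeps us off the critical value — to trivialize the remaining $t_i$-direction, and compose the two trivializations; the standalone $f_i$-fibration of $(ii)$ serves to make these local pieces match along the locus $t_i=0$.

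For $(i) \Rightarrow (iii)$ I would exploit condition $(b)$, now available from $(i)$, together with the theorem of Dutertre and dos Santos. Passing from $f$ to $f_i$ amounts to forgetting the $n-1$ coordinates of $f_i^*$ one at a time; by their result each intermediate composition again has an isolated critical value and condition $(b)$, and at each step the Milnor fiber is multiplied by a closed interval with corners rounded off. Iterating, I obtain $f_i^{-1}(t_i) \cap \B_\e^m \cong (f^{-1}(t) \cap \B_\e^m) \times \B_\eta^{n-1}$, a diffeomorphism type independent of $i$ and of $t_i$, which is exactly $(iii)$. The implication $(iii) \Rightarrow (iv)$ is then formal: a constant diffeomorphism type of the full fiber $f^{-1}(t) \cap \B_\e^m$ forces a constant diffeomorphism type of its boundary, which for $\e$ small is $f^{-1}(t) \cap \BS_\e^{m-1}$, and in particular a constant Euler characteristic.

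The main obstacle is the closing implication $(iv) \Rightarrow (i)$, which passes from the constancy of a single numerical invariant to local triviality. My plan is to argue its contrapositive through condition $(b)$: if $(b)$ fails then, by the curve selection lemma, there is a real analytic arc of points of $\Sigma_{f,\rho} \backslash V$ converging to $0$, along which the fibers $f^{-1}(t)$ become tangent to spheres $\BS_{\e'}^{m-1}$ with $\e' \to 0$; following the associated critical points of the distance function restricted to the fibers shows that the link $f^{-1}(t) \cap \BS_\e^{m-1}$ undergoes a handle attachment as $t$ crosses the corresponding discriminant wall, altering its Euler characteristic. The hard and most labor-intensive part is to show that these local Morse contributions cannot cancel, so that the change in $\chi$ is genuine; I would establish this by a parity and index analysis of the critical points involved — arranging that their contributions add rather than cancel — together with the isolated critical value hypothesis to guarantee that, away from $0$, the discriminant has codimension one and that such wall-crossings are the only source of topology change in the link. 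Once $(b)$ is forced, Massey's theorem returns $(i)$ and closes the cycle.
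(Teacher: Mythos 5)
Your proposal has a genuine gap at its foundation: the entire architecture routes through Milnor's condition $(b)$, via what you call the ``essentially standard'' converse to Massey's theorem --- that the tube fibration in $(i)$ forces $0$ to be isolated in $V\cap\overline{\Sigma_{f,\rho}\setminus V}$. This converse is not standard, is nowhere proved in your sketch, and there is a real obstruction to it: condition $(b)$ constrains tangencies of the fibers of $f$ with \emph{all} small spheres, near \emph{every} point of $V$ close to the origin, whereas $(i)$, for each fixed $\e$, only requires transversality of the fibers $f^{-1}(t)$, $0<\|t\|\le\eta$, to the single sphere $\BS_\e^{m-1}$, where $\eta=\eta(\e)$ is allowed to shrink arbitrarily fast with $\e$. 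Concretely, an analytic arc in $\Sigma_{f,\rho}\setminus V$ accumulating on a point $x\in V\setminus\{0\}$, along which the distance to the origin is strictly monotone, violates condition $(b)$; yet at each fixed radius $\e$ near $\|x\|$ it produces only tangency points whose $f$-values are bounded away from $0$, so they are avoided by shrinking $\eta(\e)$, and $(i)$ survives. Since you invoke this converse both to apply Dutertre--dos Santos in $(i)\Rightarrow(iii)$ and to re-enter $(i)$ from $(iv)$, both implications are unsupported as written. The paper never goes near condition $(b)$: it proves $(i)\Leftrightarrow(ii)$ by elementary transversality lemmas plus Ehresmann, proves $(i)\Rightarrow(iii)$ by induction on $n$ using the slice fibrations of condition $(e)$ (no appeal to Dutertre--dos Santos), and closes the cycle with $(iv)\Rightarrow(ii)$, again by induction on $n$.

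The second gap is the mechanism you propose for $(iv)\Rightarrow(i)$. You want a wall-crossing argument in which the links $f^{-1}(t)\cap\BS_\e^{m-1}$ change Euler characteristic by handle attachment as $t$ crosses the discriminant of $f|_{\BS_\e^{m-1}}$, deferring the crux (non-cancellation of Morse contributions) to an unspecified ``parity and index analysis''. But these links are closed manifolds of dimension $m-n-1$: when $m-n-1$ is odd (e.g.\ $m=4$, $n=2$) their Euler characteristic is identically zero, so no surgery can ever ``alter'' it, and the non-constancy in $(iv)$ can only be detected on the \emph{singular} levels, i.e.\ the values $t$ at which $f^{-1}(t)$ is tangent to $\BS_\e^{m-1}$; when $m-n-1$ is even, each surgery contributes $\pm 2$ with sign depending on the index, so cancellation is a genuine possibility, not a technicality. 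There is also a quantifier mismatch: the tangencies produced by failure of $(b)$ sit on spheres of radius close to $\|x\|$ for some $x\in V\setminus\{0\}$, not on the fixed sphere $\BS_\e^{m-1}$ to which $(iv)$ refers, so even a genuine change of topology there would not contradict $(iv)$ at the radius $\e$ in question. The paper's induction sidesteps all of this: for $n=1$, and in the inductive step for the scalar function $f_i$ restricted to the smooth slices $(f_i^*)^{-1}(t_i^*)\cap\BS_\e^{m-1}$, the constancy of $\chi$ of \emph{all} level sets (singular ones included) is played against Morse theory to obtain submersivity directly, after which Ehresmann applies, and the passage from $n-1$ to $n$ goes through $f_i^*$ rather than through condition $(b)$.
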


\medskip
So Theorem \ref{theo_main} provides a sufficient (and obviously necessary) condition for a map-germ $f$ with isolated critical value to have a L\^e-Milnor fibration in the tube: that the Euler characteristic of the boundary of the ``fibers" over a neighborhood of $0 \in \R^n$ is constant.

Moreover, as a corollary, it gives a stronger version of Dutertre-Santos' result:

\begin{cor} \label{cor_2}
Let $f: (\R^m,0) \to (\R^n,0)$, with $1<n<m$, be a real analytic map-germ with an isolated critical value and with a L\^e-Milnor fibration in the tube. Then for any set of indices $\{ i_1, \dots, i_k \}$ we have that $F_{i_1, \dots, i_k}$ is diffeomorphic to $F \times \B_\eta^{n-k}$ (with corners rounded off), where $F$ is the Milnor fiber of $f$ and $F_{i_1, \dots, i_k}$ is the Milnor fiber of the map-germ $(f_{i_1}, \dots, f_{i_k})$.
\end{cor}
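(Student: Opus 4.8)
The plan is to prove the statement by induction on the number $r = n - k$ of coordinates that are forgotten, carrying along a strengthened inductive hypothesis. Writing $f_S := (f_i)_{i \in S}$ for $S = \{i_1, \dots, i_k\} \subseteq \{1, \dots, n\}$, so that $F_S = F_{i_1, \dots, i_k}$, I would establish simultaneously that for every such $S$ the map-germ $f_S$ (a) has an isolated critical value, (b) admits a L\^e-Milnor fibration in the tube, and (c) has Milnor fiber $F_S$ diffeomorphic to $F \times \B_\eta^{n-|S|}$ with corners rounded off. The reason for propagating the first two properties, rather than the product formula alone, is that they are precisely the hypotheses under which Theorem \ref{theo_main} may be reapplied to $f_S$ at the next stage of the induction.

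For $r = 0$ we have $S = \{1, \dots, n\}$, $f_S = f$ and $F_S = F = F \times \B_\eta^0$, and properties (a), (b) hold by assumption. For the inductive step, assume all three properties for some $S$ with $|S| = k \geq 2$, fix $j \in S$, and set $S' = S \setminus \{j\}$. Since $f_S$ has an isolated critical value and admits the tube fibration, it satisfies condition $(i)$ of Theorem \ref{theo_main} (note $1 < k \leq n < m$), hence also condition $(ii)$. The first bulleted fibration in $(ii)$, taken for the coordinate $j$, asserts exactly that the projection of $f_S$ forgetting $f_j$---namely $f_{S'}$---admits a L\^e-Milnor fibration in the tube, giving (b) for $S'$. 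That $f_{S'}$ again has an isolated critical value is the descent of this property under projections, which belongs to the foundational setup for $f$ and its projections developed before Theorem \ref{theo_main}; this gives (a) for $S'$.

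To obtain the product formula for $F_{S'}$, I would read off the third bulleted fibration of condition $(ii)$ for $f_S$, again for the coordinate $j$. Its total space is $(f_j)^{-1}(\B_\eta^1) \cap (f_{S'})^{-1}(t_j^*) \cap \B_\e^m$, which for $\e$ small is the Milnor fiber $F_{S'}$ of $f_{S'}$, while its fibre over a value $t_j$ is $(f_j)^{-1}(t_j) \cap (f_{S'})^{-1}(t_j^*) \cap \B_\e^m = f_S^{-1}(t) \cap \B_\e^m = F_S$. As the base is a contractible interval in $\B_\eta^1$, the fibration is trivial, so $F_{S'} \cong F_S \times \B_\eta^1$. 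Combined with the inductive hypothesis $F_S \cong F \times \B_\eta^{n-k}$ this yields $F_{S'} \cong F \times \B_\eta^{n-k+1} = F \times \B_\eta^{n-|S'|}$, closing the induction; all identifications are diffeomorphisms once the product corners are smoothed. Since any chosen $j \in S$ may be forgotten at each stage, an arbitrary target subset is reached by deleting its complementary indices in any order.

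The step I expect to be the main obstacle is the bookkeeping that keeps Theorem \ref{theo_main} applicable throughout: one must check that the isolated-critical-value property and the existence of the tube fibration genuinely descend to \emph{every} iterated projection, and that a single pair $(\e, \eta)$ can be fixed so that, at each stage, the total space of the third fibration is honestly the Milnor fiber $F_{S'}$ rather than merely a set that deformation-retracts onto it. Once these uniformity and compatibility issues are settled, the corollary is a direct unwinding of the fibrations furnished by condition $(ii)$.
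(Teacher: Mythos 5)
Your proposal is correct and coincides with the paper's own argument: the paper proves Corollary~\ref{cor_2} in a single line (``Applying Theorem~\ref{theo_main} successively''), and your induction---using the first fibration of condition $(ii)$ to propagate the tube fibration to each projection, and the third fibration (over a contractible interval in $\B_\eta^1$) to gain one $\B_\eta^1$ factor per deleted coordinate---is precisely what that iteration unwinds to. The $(\e,\eta)$-uniformity/truncation issue you flag at the end (total space of the third fibration versus the genuine Milnor fiber of $f_{S'}$) is equally present, and silently glossed, in the paper's proof of $(i)\Rightarrow(iii)$ of Theorem~\ref{theo_main}, so your write-up is if anything more explicit than the original.
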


The equivalence between $(i)$ and $(iv)$ of Theorem \ref{theo_main} has a similar nature of a theorem of \cite{Su} and \cite{HL}, which says that if $f: \C^2 \to \C$ is a polynomial function, then a regular value $a \in \C$ of $f$ is not in the bifurcation set of $f$ if and only if the Euler characteristic of the fibers $\chi(f^{-1}(t))$ is constant for $t$ varying in some neighborhood of $a$. We mention that C. Joita and M. Tibar gave a real counterpart for that theorem in \cite{JT}.

\medskip
This article is organized in the following way:

In section 2, we consider the more general situation of a real analytic map (instead of a map-germ). We study the general case when the discriminant of $f$ is arbitrary.

In section 3, we consider a map-germ $f: (\R^m,0) \to (\R^n,0)$ with an isolated critical value. We prove Theorem \ref{theo_main}.

In section 4, we study the topology of a real analytic map-germ $f: (\R^m,0) \to (\R^n,0)$ with a L\^e-Milnor fibration, using the Euler characteristic with compact support. We relate the Euler characteristic with compact support of the link $K$ of $f$ with the Euler characteristic of the Milnor fiber. Precisely, we prove:

\begin{theo} \label{theo_link}
Let $f: (\R^m,0) \to (\R^n,0)$, with $1 \leq n < m$, be a real analytic map-germ with an isolated critical value and with a L\^e-Milnor fibration. If $1<n$ we have:
$$
\chi_C(K) =
\begin{cases}
\hspace{0.8cm} 0 \hspace{1cm} , \ {\text if} \ m \in 2 \Z \ \ {\text and} \ n \in 2 \Z \ ; \\
\hspace{.42cm} 2 \chi(F) \hspace{.42cm} , \ {\text if} \ m \in 2 \Z \ \ {\text and} \ n \in 2 \Z+1 \ ; \\
\hspace{0.8cm} 2 \hspace{1cm} , \ {\text if} \ m \in 2 \Z+1 \ \ {\text and} \ n \in 2 \Z \ ; \\
2 -2 \chi(F) \ , \ {\text if} \ m \in 2 \Z+1 \ \ {\text and} \ n \in 2 \Z+1 \ . \\
\end{cases} 
$$

If $n=1$ we have:
$$
\chi_C(K) =
\begin{cases}
\hspace{.42cm} \chi(F_+) + \chi(F_-) \hspace{.42cm} , \ {\text if} \ m \in 2 \Z \ ; \\
2 -\chi(F_+) - \chi(F_-) \ , \ {\text if} \ m \in 2 \Z+1  \ , \\
\end{cases} 
$$
where $F_+$ is the Milnor fiber to the right, that is, $F_+ := f^{-1}(t) \cap \B_\e$ for $t>0$ sufficiently small, and $F_-$ is the Milnor fiber to the left, that is, $F_- := f^{-1}(t) \cap \B_\e$ for $t<0$ sufficiently small.
\end{theo}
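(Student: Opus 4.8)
The plan is to compute $\chi_C(K)$ for the link $K := f^{-1}(0) \cap \BS_\e^{m-1}$, which is compact so that $\chi_C(K) = \chi(K)$, by relating it through an additivity argument to the Euler characteristics of the Milnor fibre $F$ and of a couple of spheres. The central object is the solid Milnor tube $T := f^{-1}(\B_\eta^n) \cap \B_\e^m$, a compact $m$-manifold with corners whose boundary splits as $\partial T = W \cup \mathcal{N}$, where $W := f^{-1}(\B_\eta^n) \cap \BS_\e^{m-1}$ is the outer part and $\mathcal{N} := f^{-1}(\BS_\eta^{n-1}) \cap \B_\e^m$ is the lateral part, meeting along the corner $\partial\mathcal{N} = W \cap \mathcal{N} = f^{-1}(\BS_\eta^{n-1}) \cap \BS_\e^{m-1}$. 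First I would record the elementary boundary identity $\chi(\partial M) = (1 - (-1)^d)\chi(M)$, valid for any compact $d$-manifold $M$ with boundary; it follows from $\chi(DM) = 2\chi(M) - \chi(\partial M)$ for the double $DM$, together with the vanishing of the Euler characteristic of closed odd-dimensional manifolds.

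Then I would assemble three ingredients. (1) By the existence of the L\^e-Milnor fibration together with the conic structure, $T$ deformation retracts onto the central fibre $f^{-1}(0) \cap \B_\e^m$, which is a cone over $K$ and hence contractible; thus $\chi(T) = 1$ and the boundary identity gives $\chi(\partial T) = 1 - (-1)^m = \chi(\BS_\e^{m-1})$. (2) The restriction $f_{|} : \mathcal{N} \to \BS_\eta^{n-1}$ is the L\^e-Milnor fibration over the target sphere, a smooth locally trivial bundle of compact manifolds-with-boundary with fibre $F$; multiplicativity of the Euler characteristic over the finite CW base $\BS_\eta^{n-1}$ yields $\chi(\mathcal{N}) = \chi(F)\,\chi(\BS_\eta^{n-1})$ and, restricting to the boundaries of the fibres, $\chi(\partial\mathcal{N}) = \chi(\partial F)\,\chi(\BS_\eta^{n-1})$. (3) Applying the boundary identity to the fibre $F$ (of dimension $m-n$) gives $\chi(\partial F) = (1 - (-1)^{m-n})\chi(F)$.

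Finally, since $\partial T = W \sqcup (\mathcal{N} \setminus \partial\mathcal{N})$ as a disjoint union of locally closed sets, additivity of $\chi_C$ (equivalently Mayer-Vietoris, as all the pieces are compact) gives $\chi(\partial T) = \chi(W) + \chi(\mathcal{N}) - \chi(\partial\mathcal{N})$. Substituting the three ingredients and using $\chi(W) = \chi(K)$, one solves for $\chi(K)$ and obtains the closed formula $\chi_C(K) = (1 - (-1)^m) - (-1)^{m-n}(1 - (-1)^n)\chi(F)$, whose four parity cases reproduce exactly the four lines of the statement when $n > 1$. The case $n = 1$ is the same computation with $\BS_\eta^{0}$ equal to the two points $\{\pm\eta\}$: here $\mathcal{N} = F_+ \sqcup F_-$ and $\partial\mathcal{N} = \partial F_+ \sqcup \partial F_-$, and feeding $\chi(\partial F_\pm) = (1 - (-1)^{m-1})\chi(F_\pm)$ into the same relation $\chi(\partial T) = \chi(K) + \chi(\mathcal{N}) - \chi(\partial\mathcal{N})$ produces the two displayed lines.

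The step I expect to be the main obstacle is the geometric input $\chi(W) = \chi(K)$, that is, that the outer tube $W = \{x \in \BS_\e^{m-1} : |f(x)| \le \eta\}$ deformation retracts onto the link $K$. This is where the hypotheses are genuinely used: for $\e$ small the fibre $f^{-1}(0)$ is transverse to $\BS_\e^{m-1}$, so $f_{|\BS_\e^{m-1}}$ is a submersion along $K$ and hence on a neighbourhood of it; consequently $|f|^2$ has no critical points on $W \setminus K$ for $\eta \ll \e$, and a gradient-type flow (justified in the subanalytic setting by a \L ojasiewicz / curve-selection argument) retracts $W$ onto $K$. The companion retraction of the solid tube $T$ onto $f^{-1}(0) \cap \B_\e^m$ used in ingredient $(1)$ is standard once the L\^e-Milnor fibration is in hand. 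Everything else is formal bookkeeping with Euler characteristics.
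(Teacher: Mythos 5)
Your route is genuinely different from the paper's, and apart from one flawed justification (discussed below) it works and reproduces exactly the paper's formula in all parity cases, including $n=1$. The paper never touches the compact solid tube: it applies $\chi_C$-additivity inside the \emph{open} tube, writing $(\|f\|)^{-1}\big([0,\eta)\big)\cap\mathring{\B}_\e^m$ (asserted to be an open ball, hence of $\chi_C=(-1)^m$) as the disjoint union of the open central fibre $\mathring{H}_0$, an open cone on $K$, and $(\|f\|)^{-1}\big((0,\eta)\big)\cap\mathring{\B}_\e^m$, which fibres over the interval $(0,\eta)$ with fibre $\mathring{H}_s$, itself a bundle over $\BS^{n-1}$ with fibre $\mathring{F}$. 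You instead do inclusion--exclusion with ordinary $\chi$ on the compact decomposition $\partial T=W\cup\mathcal{N}$, with the parity identity $\chi(\partial M)=(1-(-1)^{\dim M})\chi(M)$ playing the role of the paper's $\chi_C$ computations for open cells and fibrations. The two pillars are the same in both proofs (contractibility/ball structure of the solid tube, and the fibration of the Milnor tube over the target sphere), but your bookkeeping requires one extra geometric input that the paper's arrangement avoids, namely $\chi(W)=\chi(K)$. Both proofs share the same imprecision of treating the base of the tube fibration as all of $\BS^{n-1}$ rather than $\BS^{n-1}\cap\Im(f)$, so I set that aside.

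The genuine gap is in your justification of $\chi(W)=\chi(K)$. You claim: ``for $\e$ small the fibre $f^{-1}(0)$ is transverse to $\BS_\e^{m-1}$, so $f_{|\BS_\e^{m-1}}$ is a submersion along $K$.'' This inference is false under the standing hypothesis of an isolated critical \emph{value} (as opposed to an isolated critical \emph{point}): $\Sigma(f)$ is only required to lie in $f^{-1}(0)$, so it may be positive-dimensional and meet $\BS_\e^{m-1}$, and at such points of $K$ no transversality of the fibre can make the restriction of $f$ submersive; moreover $f^{-1}(0)$ may be singular, so transversality could at best be meant in a stratified sense. A concrete counterexample to the inference: $f(x,y,z)=x^2+y^2$ on $\R^3$, which has an isolated critical value and a tube fibration; here $f^{-1}(0)$ is the $z$-axis, smooth and transverse to every sphere, yet $\nabla f$ vanishes at the two points of $K$, so $f_{|\BS_\e^2}$ is critical precisely along $K$. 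Fortunately the statement you need is true and the repair is standard: you do not need a submersion along $K$, only that $\|f\|^2_{|\BS_\e^{m-1}}$ has no critical points on $W\setminus K$ once $\eta\ll\e$. Since $\|f\|^2_{|\BS_\e^{m-1}}$ is real analytic on a compact manifold, it has only finitely many critical values (the same fact the paper invokes to prove Lemma \ref{lemma_0}), so for $\eta$ small all of its critical points in $W$ lie on $K=\{\|f\|^2_{|\BS_\e^{m-1}}=0\}$; \L ojasiewicz's retraction theorem (or your gradient flow with \L ojasiewicz control) then retracts $W$ onto $K$, i.e.\ $W$ is a regular neighbourhood of the compact analytic set $K$ in $\BS_\e^{m-1}$. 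With that substitution your argument is complete; the remaining steps ($\chi(T)=1$ via the Durfee-type retraction of the solid tube onto the cone $f^{-1}(0)\cap\B_\e^m$, the boundary-parity identity, and the fibrations of $\mathcal{N}$ and $\partial\mathcal{N}$ over the target sphere) are correct.
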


A quite similar result was already proved by N. Dutertre and R.N.A. dos Santos in \cite{DS}, when $n \geq 2$. Also, in \cite{Ha} H.A. Hamm gave similar results when $f$ has an isolated critical point.

Later, we restrict to the case when $f$ has an isolated critical point, and we obtain:

\begin{cor} \label{cor_EC}
Let $f: (\R^m,0) \to (\R^n,0)$, with $1 \leq n<m$, be a real analytic map-germ with an isolated critical point at $0 \in \R^m$. 
\begin{itemize}
\item[$(i)$] If $n>1$ and if $m$ is odd, then $\chi(F)=1$. In particular, the Milnor fiber $F$ does not have the homotopy type of a bouquet of spheres of the same dimension.
\item[$(ii)$] If $n=1$ and if $m$ is odd, then $\chi(F_+) + \chi(F_-) =2$. In particular, the ``Milnor fibers" $F_+$ and $F_-$ of $f$ cannot have both the homotopy type of bouquets of spheres of the same dimension.
\item[$(iii)$] If $n=1$ and if $m$ is even, then $\chi(F_+) = \chi(F_-)$.
\end{itemize}
\end{cor}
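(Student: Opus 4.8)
The plan is to derive all three statements from Theorem \ref{theo_link} together with two elementary facts about manifolds, the point being that the isolated critical point hypothesis makes the link $K = f^{-1}(0) \cap \BS_\e^{m-1}$ a compact boundaryless smooth manifold of dimension $m-n-1$. First I would record the two inputs. Since $K$ is compact, its compactly supported Euler characteristic agrees with the ordinary one, $\chi_C(K) = \chi(K)$; and since $\dim K = m-n-1$, the Euler characteristic of $K$ vanishes whenever $m-n-1$ is odd, that is, whenever $m$ and $n$ have the same parity. The second input is the doubling relation: for a compact manifold $M$ with boundary and $\dim M$ odd one has $\chi(M) = \tfrac12 \chi(\partial M)$ (immediate from $\chi(DM) = 2\chi(M) - \chi(\partial M)$ and $\chi(DM)=0$ for the odd-dimensional double $DM$). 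Here I will use that in the isolated critical point case the Milnor fibre meets the sphere in a copy of the link, so that $\partial F \cong K$ (respectively $\partial F_\pm \cong K$ when $n=1$) and hence $\chi(\partial F) = \chi(K)$.

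The proof is then a parity bookkeeping. For $(i)$, with $m$ odd and $n>1$, I split on the parity of $n$. If $n$ is odd then $\dim K$ is odd, so $\chi_C(K)=\chi(K)=0$, and the fourth case of Theorem \ref{theo_link} reads $0 = 2 - 2\chi(F)$, giving $\chi(F)=1$. If $n$ is even, the third case gives $\chi(K)=\chi_C(K)=2$, while $\dim F = m-n$ is odd, so the doubling relation yields $\chi(F) = \tfrac12\chi(\partial F) = \tfrac12 \chi(K) = 1$. Either way $\chi(F)=1$. For $(ii)$, with $n=1$ and $m$ odd, $\dim K = m-2$ is odd, so $\chi_C(K)=0$, and the $n=1$, $m$ odd case of Theorem \ref{theo_link} gives $0 = 2 - \chi(F_+) - \chi(F_-)$, i.e. $\chi(F_+)+\chi(F_-)=2$. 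For $(iii)$, with $n=1$ and $m$ even, the fibres $\dim F_\pm = m-1$ are odd-dimensional, so the doubling relation applies to both; since $\partial F_+ \cong K \cong \partial F_-$ I get $\chi(F_\pm) = \tfrac12\chi(K)$, whence $\chi(F_+)=\chi(F_-)$.

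The ``in particular'' assertions are purely numerical. A bouquet of $\ell \ge 1$ spheres of a fixed dimension $d$ has Euler characteristic $1 + (-1)^d \ell \neq 1$, so comparing with $\chi(F)=1$ in $(i)$ forces $\ell=0$, ruling out a nontrivial such bouquet; likewise in $(ii)$, if $F_+$ and $F_-$ were bouquets of $\ell_+$ and $\ell_-$ spheres of a common dimension $d$, then $\chi(F_+)+\chi(F_-) = 2 + (-1)^d(\ell_+ + \ell_-)$, which equals $2$ only when $\ell_+ + \ell_- = 0$. I expect the main obstacle to be not the bookkeeping but the geometric input $\partial F \cong K$ (and, for $(iii)$, $\partial F_+ \cong \partial F_-$): note that Theorem \ref{theo_link} alone is insufficient for the $n$ even subcase of $(i)$ and for $(iii)$, since in those parity ranges its right-hand side does not involve $\chi(F)$ separately, so the doubling relation is genuinely needed. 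One must therefore check that in the isolated critical point case the fibre of the L\^e--Milnor fibration meets $\BS_\e^{m-1}$ in the link; for $n=1$ this amounts to verifying that $0$ is a regular value of $f|_{\BS_\e^{m-1}}$ with no critical values of $f|_{\BS_\e^{m-1}}$ in a small interval about it, so that the nearby level sets $f^{-1}(\pm\delta)\cap\BS_\e^{m-1}$ are all diffeomorphic to $K$ by Ehresmann's theorem.
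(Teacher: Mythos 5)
Your proof is correct, and for parts $(ii)$ and $(iii)$ it coincides with the paper's: $(ii)$ is the same computation feeding $\chi_C(K)=0$ (valid since $\dim K=m-2$ is odd) into the $n=1$ case of Theorem \ref{theo_link}, and $(iii)$ is exactly the paper's ``well-known topological argument'' $2\chi(F_\pm)=\chi(\partial F_\pm)=\chi(K)$. The genuine difference is in the even-$n$ subcase of $(i)$. The paper treats only odd $n$ directly via Theorem \ref{theo_link} and then disposes of even $n$ by invoking Theorem \ref{theo_main}: since an isolated critical point survives composition with the projections ($\Sigma(f_i^*)\subset\Sigma(f)$), the map $f_i^*$ again has an isolated critical point with target of odd dimension $n-1$, and its Milnor fiber is $F\times\B^1$ by Theorem \ref{theo_main}$(iii)$/Corollary \ref{cor_2}, so $\chi(F)$ equals an Euler characteristic already known to be $1$ by the case previously settled. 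You instead reuse the doubling relation: for $m$ odd and $n$ even, Theorem \ref{theo_link} gives $\chi_C(K)=2$, and since $\dim F=m-n$ is odd and $\partial F\cong K$, you get $\chi(F)=\tfrac12\chi(\partial F)=1$. Both arguments are valid; your route makes Corollary \ref{cor_EC} a consequence of Theorem \ref{theo_link} plus classical facts about isolated critical points alone, with no appeal to Theorem \ref{theo_main}, and it unifies the even-$n$ case of $(i)$ with $(iii)$ under a single tool, whereas the paper's route ties the corollary to its main fibration theorem. Your explicit justification that $\partial F\cong K$ (Ehresmann applied to $f$ restricted to $\BS_\e^{m-1}$, using that regular values of the restriction form a neighborhood of $0$) is also a point the paper uses but leaves implicit in its proof of $(iii)$.
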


The item $(i)$ of Corollary \ref{cor_EC} was already proved by dos Santos, Dreibelbis and Dutertre in \cite{SDD}, using Morse theory for manifolds with boundary, and item $(iii)$ was proved by Hamm in \cite{Ha}.

In section 5, we use the results of section 3 to answer a question made by J.L. Cisneros-Molina, J. Seade and J. Snoussi in \cite{SCG}. Precisely, we prove that the diffeo\-morphism type of the link of the singular germ $(V_i^*,0)$ given by the map-germ $f_i^*: (\R^m,0) \to (\R^{n-1},0)$ obtained by removing from $f$ the coordinate function $f_i$, is independent of $i \in \{1, \dots, n\}$. This is contained in Theorem \ref{theo_SCG}.

\medskip
\section{The general situation}

Let $f: \R^m \to \R^n$, with $1\leq n <m$, be a real analytic map, and let $\Delta$ be the discriminant of $f$, that is, the set of critical values of $f$. We say that $f$ has a L\^e-Milnor fibration if there exists a real number $\e_0>0$ such that, for any $\e \leq \e_0$, there exists $\eta$, with $0<\eta \ll \e$, such that the restriction:
$$f_|: f^{-1}(\B_\eta^n \backslash \Delta) \cap \B_\e^m \to (\B_\eta^n \backslash \Delta) \cap \Im(f)$$
is the projection of a smooth locally trivial fibration, where $\Im(f)$ denotes the image of the map $f_|$.

In this section, we study the existence of a L\^e-Milnor fibration for real analytic maps with arbitrary discriminant set. Later, in the next sections, we restrict to the local situation, that is, we consider map-germs $f: (\R^m,0) \to (\R^n,0)$, and we suppose that $f$ has isolated critical value. Although this path leads to more complicate notations and some extra considerations, we chose to start with this more general situation. 

Naturally, we should start with the simplest case of a function $f: \R^m \to \R$. The existence of a L\^e-Milnor fibration in this case is an immediate consequence of the combination of Sard's theorem with Ehresmann's fibration lemma. We write it bellow, for completeness reasons:

\begin{lemma} \label{lemma_0}
Let $f: \R^m \to \R$ be a real analytic function and suppose that $f(0)=0$. Then for any positive real number $\e>0$ there exists a positive real number $\eta>0$ such that the restriction:
$$f_|: f^{-1}(\B_\eta^1 \backslash \{0\}) \cap \B_\e^m \to \B_\eta^1 \backslash \{0\} \cap \Im(f)$$
is the projection of a locally trivial fibration, where $\Im(f)$ denotes the image of $f$.
\end{lemma}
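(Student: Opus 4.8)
The plan is to produce the two submersion conditions required by Ehresmann's fibration lemma in its version for manifolds with boundary, together with properness; the only genuine subtlety is to arrange that $0$ is the sole value we need to remove from the base. So I would fix $\e>0$ and treat separately the critical points of the restriction of $f$ to the open ball and of the restriction $f|_{\BS_\e^{m-1}}$.

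First I would bound the critical values away from $0$. The critical set $\Sigma := \{\nabla f = 0\}$ is a real analytic variety, so $\Sigma \cap \B_\e^m$ is a compact subanalytic set and its image $f(\Sigma \cap \B_\e^m)$ is a subanalytic subset of $\R$, hence a finite union of points and closed intervals. By Sard's theorem this image has measure zero, so it contains no interval and is therefore a \emph{finite} set. The same argument applied to $f|_{\BS_\e^{m-1}}$, whose critical set is the compact subanalytic set $\{p \in \BS_\e^{m-1} : \nabla f(p) \text{ is parallel to } p\}$, shows that its critical values also form a finite set. Consequently $0$ is isolated among the critical values of both restrictions, and I may choose $\eta>0$ so small that $\B_\eta^1 \backslash \{0\}$ contains no critical value of either the restriction of $f$ to the open ball or of $f|_{\BS_\e^{m-1}}$.

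With such an $\eta$, set $X := f^{-1}(\B_\eta^1 \backslash \{0\}) \cap \B_\e^m$. Over every nonzero $t \in \B_\eta^1$ the map $f$ is a submersion, so each level set is a smooth hypersurface; moreover, for $p \in \BS_\e^{m-1}$ with $f(p)\neq 0$ the vectors $\nabla f(p)$ and $p$ are linearly independent, which is exactly transversality of the level sets to the sphere $\BS_\e^{m-1}$. Hence $X$ is a smooth manifold with boundary $f^{-1}(\B_\eta^1 \backslash \{0\}) \cap \BS_\e^{m-1}$, and $f$ restricts to a submersion on this boundary as well. Finally, $f_|$ is proper onto $\B_\eta^1 \backslash \{0\}$, since the preimage of a compact subset of the base is closed in the compact ball $\B_\e^m$ and therefore compact.

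Having the two submersion conditions and properness, I would conclude by invoking Ehresmann's fibration lemma for manifolds with boundary, which gives that $f_|$ is a smooth locally trivial fibration. The delicate point is the first step: Sard's theorem by itself only guarantees that the critical values have measure zero, which does not rule out their accumulating at $0$; it is the real analyticity of $f$, through the subanalyticity of the images of the compact critical sets, that forces the critical values to be isolated and thus makes a punctured interval of honest regular values available.
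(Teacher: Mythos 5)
Your proposal is correct and follows essentially the same route as the paper: isolate $0$ among the critical values of both the restriction of $f$ to a ball and the restriction to the sphere $\BS_\e^{m-1}$, deduce that the level sets over $\B_\eta^1 \backslash \{0\}$ are smooth and transverse to the sphere, and conclude with Ehresmann's fibration lemma. The only difference is that the paper simply cites the finiteness of critical values of an analytic function on a bounded manifold as a well-known fact (referring to Milnor and Verdier), whereas you reprove it via subanalyticity of the image of the compact critical set combined with Sard's theorem --- a valid argument that is, in essence, the standard proof of the cited fact.
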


\begin{proof}
 It is well-known that if $g: M \to \R$ is a real analytic function defined on a limited smooth manifold $M \subset \B_\delta$, for some closed ball $\B_\delta$ in $\R^m$, then $g$ has at most a finite number of critical values in $\R$ (see \cite{Mi} and \cite{Ve} for instance).

Now let $\tilde{f}$ be the restriction of $f$ to the open ball $\mathring{\B}_{2\e}^m$ and let $\tilde{\tilde{f}}$ be the restriction of $f$ to the sphere $\BS_\e^{m-1}$. By the previous paragraph, both $\tilde{f}$ and $\tilde{\tilde{f}}$ have at most an isolated critical value at $0 \in \R$. So there exists $\eta>0$ sufficiently small such that $0$ is the only critical value of both $\tilde{f}$ and $\tilde{\tilde{f}}$ in $\B_\eta^1$. This means that $f^{-1}(t) \cap \mathring{\B}_{2\e}^m$ is a smooth manifold that intersects $\BS_\e^{m-1}$ transversally, for any $t \in \B_\eta^1 \backslash \{0\} \cap \Im(f)$. Then the result follows from Ehresmann's fibration lemma.

\end{proof}

Now let us consider the case $n>1$. First, we shall establish some useful notations. Given an Euclidian space $\R^n$, with $n>1$, consider the canonic projections:
$$
\begin{array}{cccc}
\pi_i \ : & \! \R^n & \! \longrightarrow & \! \R \\
& \! (t_1, \dots, t_n) & \! \longmapsto & \! t_i
\end{array}
$$
and:
$$
\begin{array}{cccc}
\pi_i^* \ : & \! \R^n & \! \longrightarrow & \! \R^{n-1} \\
& \! (t_1, \dots, t_n) & \! \longmapsto & \! (t_1, \dots, t_{i-1}, t_{i+1}, \dots, t_n)
\end{array}
\, ,$$
for each $i=1, \dots, n$. 

In order to simplify notation, we set: 
$$t_i^* := \pi_i^*(t) \, .$$

\medskip
Now, given a map $f = (f_1, \dots, f_n): \R^m \to \R^n$ set:
$$f_i^* := (\pi_i^* \circ f): \R^m \to \R^{n-1} $$
for each $i= 1, \dots, n$. That is, $f_i^* = (f_1, \dots, f_{i-1}, f_{i+1}, \dots, f_n)$. 

We have:

\begin{remark} \label{remark_1}
For any $i \in \{1, \dots, n\}$ the critical set $\Sigma(f)$ of $f$ is the union of:
\begin{itemize}
\item[$\circ$] the critical set $\Sigma(f_i)$ of $f_i$ ;
\item[$\circ$] the critical set $\Sigma(f_i^*)$ of $f_i^*$ ;
\item[$\circ$] the set of points $x \in \R^m$ such that $x$ is a regular point of both $f_i$ and $f_i^*$ and such that $(f_i)^{-1} \big( f_i(x) \big)$ intersects $(f_i^*)^{-1}\big( (f_i^*)(x) \big)$ not transversally at $x$ in $\R^m$.
\end{itemize}
\end{remark}

\medskip
The main idea of this paper is to read the existence of a L\^e-Milnor fibration for $f: \R^m \to \R^n$ in the topology of the maps $f_i^*: \R^m \to \R^{n-1}$. So the first step is to relate the transversality of the ``fibers" of $f$ with the spheres centered at the origin and the transversality of the ``fibers" of $f_i^*$ and of $f_i$, as well as their intersections, with that spheres. 

In this sense, we have:

\begin{lemma} \label{lemma_01}
Let $A$, $B$ and $N$ be sub-manifolds of a manifold $M$, and set $A':= A \cap N$ and $B':= B \cap N$. Then for any point $x \in A \cap B \cap N$, the following conditions are equivalent:
\begin{itemize}
\item[$(i)$] $B$ intersects $N$ transversally in $M$ at $x$ and $A'$ intersects $B'$ transversally in $N$ at $x$;
\item[$(ii)$] $A'$ intersects $B$ transversally in $M$ at $x$.
\end{itemize}
\end{lemma}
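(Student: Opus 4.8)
The plan is to observe that all three transversality assertions are conditions on the tangent spaces at the single point $x$, so the whole lemma reduces to an elementary statement in linear algebra inside $V := T_x M$. I would write $a := T_x A$, $b := T_x B$, $w := T_x N$, and denote by $\alpha := T_x A'$ and $\beta' := T_x B'$ the tangent spaces of the two sliced submanifolds (which are presupposed to be submanifolds, since otherwise the transversality statements in $(i)$ and $(ii)$ would be meaningless). Since $A' \subseteq N$ and $B' \subseteq N$ we have $\alpha \subseteq w$ and $\beta' \subseteq w$, and since $A' \subseteq A$, $B' \subseteq B$ we have $\alpha \subseteq a$, $\beta' \subseteq b$. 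The point I want to stress from the outset is that I will \emph{not} need the clean-intersection identity $\alpha = a \cap w$; I will only use the inclusion $\alpha \subseteq w$ together with the identity $\beta' = b \cap w$, the latter being valid precisely when $B$ meets $N$ transversally (a transversal intersection is automatically clean). With this notation the three conditions read:
\begin{itemize}
\item[$\circ$] $B \pitchfork N$ in $M$ at $x$: \quad $b + w = V$;
\item[$\circ$] $A' \pitchfork B'$ in $N$ at $x$: \quad $\alpha + \beta' = w$;
\item[$\circ$] $A' \pitchfork B$ in $M$ at $x$: \quad $\alpha + b = V$.
\end{itemize}

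For $(i) \Rightarrow (ii)$ I would argue as follows. Assume $b + w = V$ and $\alpha + \beta' = w$, where $\beta' = b \cap w$ because $B \pitchfork N$. Given an arbitrary $v \in V$, write $v = \beta_0 + \omega$ with $\beta_0 \in b$ and $\omega \in w$; then split $\omega = \alpha_1 + \beta_1$ with $\alpha_1 \in \alpha$ and $\beta_1 \in \beta' \subseteq b$. Regrouping gives $v = \alpha_1 + (\beta_0 + \beta_1) \in \alpha + b$, so $\alpha + b = V$, which is $(ii)$.

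For $(ii) \Rightarrow (i)$ I would first recover the transversality of $B$ with $N$: from $\alpha + b = V$ and $\alpha \subseteq w$ one gets $V = \alpha + b \subseteq w + b$, hence $b + w = V$. This in turn legitimizes the identity $\beta' = b \cap w$. It then remains to show $\alpha + \beta' = w$. The inclusion $\alpha + \beta' \subseteq w$ is clear; for the reverse, take $\omega \in w \subseteq V = \alpha + b$ and write $\omega = \alpha_1 + \beta$ with $\alpha_1 \in \alpha \subseteq w$ and $\beta \in b$. Then $\beta = \omega - \alpha_1 \in w$, so $\beta \in b \cap w = \beta'$, whence $\omega = \alpha_1 + \beta \in \alpha + \beta'$. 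Thus $\alpha + \beta' = w$, and together with $b + w = V$ this is exactly $(i)$.

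The computations themselves are routine; the one place demanding care — and the main obstacle to a clean write-up — is the bookkeeping of the tangent-space identifications. Concretely, one must not invoke $T_x B' = T_x B \cap T_x N$ before the transversality $B \pitchfork N$ has been secured, which is why in the direction $(ii) \Rightarrow (i)$ I derive $b + w = V$ first and only afterwards use $\beta' = b \cap w$. The deliberate \emph{asymmetry} of the argument — using the clean-intersection identity for $B'$ but only the inclusion $\alpha \subseteq w$ for $A'$ — is precisely what lets the equivalence hold under the minimal hypotheses stated, without any transversality assumption on the pair $(A, N)$.
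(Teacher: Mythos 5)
Your proposal is correct and is essentially the same argument as the paper's: both directions use the identical vector decompositions and regroupings, with the transversality of $B$ and $N$ derived first in the direction $(ii)\Rightarrow(i)$ before the identity $T_xB' = T_xB \cap T_xN$ is invoked. Your only addition is the explicit bookkeeping of which tangent-space identifications are needed and when they become available (the paper uses $T_xB' = T_xB \cap T_xN$ silently), which is a refinement of exposition rather than a different route.
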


\begin{proof}
Suppose that $B$ intersects $N$ transversally in $M$ at $x$ and $A'$ intersects $B'$ transversally in $N$ at $x$. Then $T_x(M) = T_x(B) + T_x(N)$ and $T_x(N) = T_x(A') + T_x(B')$. So if $\vec u$ is a vector of $T_x(M)$, there exist vectors $\vec v \in T_x(N)$ and $\vec w \in T_x(B)$ such that $\vec u = \vec v + \vec w$. But $\vec v$ can be written as a sum $\vec k + \vec l$, for some $\vec k \in T_x(A')$ and $\vec l \in T_x(B')$. So $\vec u = \vec k + (\vec l + \vec w)$, where the vector $\vec l + \vec w$ is in $T_x(B)$.

Conversely, suppose that $A'$ intersects $B$ transversally in $M$ at $x$. Then $T_x(M) = T_x(A') + T_x(B) \subset T_x(N) + T_x(B)$, and hence $B$ intersects $N$ transversally in $M$ at $x$. Moreover, given a vector $\vec u \in T_x(N)$, there exist vectors $\vec v \in T_x(A') \subset T_x(N)$ and $\vec w \in T_x(B)$ such that $\vec u = \vec v + \vec w$. But clearly $\vec w$ must be also in $T_x(N)$ and hence it is in $T_x(B') = T_x(B) \cap T_x(N)$. 

\end{proof}

Bringing the lemma above to our context, we get:

\begin{lemma} \label{lemma_1}
Let $f: \R^m \to \R^n$ be a real analytic map, with $1<n<m$, and let $x \neq 0$ be a regular point of $f$ such that:
\begin{itemize}
\item[$(a)$] $T_x \big( (f_i)^{-1}(t_i) \big)$ intersects $T_x (\BS_{||x||})$ transversally in $T_x(\R^m)$,
\end{itemize}
for some $i=1, \dots, n$, where $t = f(x)$ and $\BS_{||x||}$ is the sphere around $0$ in $\R^m$ that contains $x$. Then the following conditions are equivalent:
\begin{itemize}
\item[$(c)$] $[ T_x \big( (f_i^*)^{-1}(t_i^*) \big) \cap T_x (\BS_{||x||}) ]$ intersects $[ T_x \big( (f_i)^{-1}(t_i) \big) \cap T_x (\BS_{||x||}) ]$ transversally in $T_x(\BS_{||x||})$.
\item[$(d)$] $[ T_x \big( (f_i^*)^{-1}(t_i^*) \big) \cap T_x (\BS_{||x||}) ]$ intersects $T_x \big( (f_i)^{-1}(t_i) \big)$ transversally in $T_x(\R^m)$.
\end{itemize}
\end{lemma}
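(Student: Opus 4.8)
The plan is to obtain Lemma \ref{lemma_1} as an immediate specialization of Lemma \ref{lemma_01}, applied not to submanifolds of $\R^m$ but to the relevant tangent spaces regarded as linear subspaces of the vector space $T_x(\R^m)$. The point is that the proof of Lemma \ref{lemma_01} manipulates only tangent spaces and reduces transversality to equalities of the form $T_x(P)+T_x(Q)=T_x(W)$; it is therefore a purely linear-algebraic statement that remains valid when the ambient ``manifold'' is a vector space and $A$, $B$, $N$ are linear subspaces. In that case every linear subspace is its own tangent space at each of its points, and the tangent space of an intersection of subspaces is tautologically the intersection of the subspaces.

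Concretely, I would set $M:=T_x(\R^m)$, $N:=T_x(\BS_{||x||})$, $A:=T_x\big((f_i^*)^{-1}(t_i^*)\big)$ and $B:=T_x\big((f_i)^{-1}(t_i)\big)$, all viewed as linear subspaces of $M$; here one first notes that, since $x$ is a regular point of $f$, the $n$ gradient rows of $Df_x$ are independent, so any subfamily of them is independent as well, whence $x$ is automatically a regular point of both $f_i$ and $f_i^*$ and the tangent spaces above are well defined. With $A':=A\cap N$ and $B':=B\cap N$, transversality of two subspaces inside a third becomes the condition that their sum fills the third. Under this dictionary, and respecting the asymmetric roles of $A$ and $B$ in Lemma \ref{lemma_01}, hypothesis $(a)$ says that $B$ meets $N$ transversally in $M$, condition $(c)$ says that $A'$ meets $B'$ transversally in $N$, and condition $(d)$ says that $A'$ meets $B$ transversally in $M$.

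Lemma \ref{lemma_01}, applied at the origin of $M$ (which lies in $A\cap B\cap N$ since all three are linear), then states that $(d)$ holds if and only if both ``$B$ meets $N$ transversally in $M$'' and $(c)$ hold. Invoking hypothesis $(a)$ to discard the first of these two clauses, the biconditional collapses to the sought equivalence of $(c)$ and $(d)$.

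The only genuine subtlety, and the step I expect to require the most care, is the distinction between an intersection of tangent spaces, as appears in $(c)$ and $(d)$, and the tangent space of an intersection of submanifolds. It would be a mistake to apply Lemma \ref{lemma_01} directly to the submanifolds $(f_i^*)^{-1}(t_i^*)$, $(f_i)^{-1}(t_i)$ and $\BS_{||x||}$ of $\R^m$, because the needed identity $T_x(A\cap N)=T_x(A)\cap T_x(N)$ fails in general without further transversality hypotheses. Working throughout at the level of linear subspaces of $T_x(\R^m)$ avoids this difficulty, since there the identity holds by definition.
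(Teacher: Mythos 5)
Your proposal is correct and takes essentially the same route as the paper, which offers no argument for Lemma \ref{lemma_1} beyond deducing it from Lemma \ref{lemma_01} (``bringing the lemma above to our context''). Your added precision --- applying Lemma \ref{lemma_01} to the tangent spaces viewed as linear subspaces of $T_x(\R^m)$, with $B = T_x\big((f_i)^{-1}(t_i)\big)$, $N = T_x(\BS_{||x||})$, $A = T_x\big((f_i^*)^{-1}(t_i^*)\big)$, so that hypothesis $(a)$ discharges the first clause of condition $(i)$ of that lemma --- is exactly the right way to make this rigorous, since the statement of Lemma \ref{lemma_1} concerns intersections of tangent spaces rather than tangent spaces of intersections.
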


Now we can prove:

\begin{prop} \label{prop_3}
Let $f: \R^m \to \R^n$ be a real analytic map, with $1<n<m$ and with $f(0)=0$. Let $\e$ and $\eta$ be positive real numbers and let $\Delta$ be the set of critical values of $f$ in $\B_\eta^n$. The following conditions are equivalent:
\medskip
\begin{itemize}
\item[$(i)$] The restriction:
$$f_|: f^{-1}(\B_\eta^n \backslash \Delta) \cap \B_\e^m \to (\B_\eta^n \backslash \Delta) \cap \Im(f)$$
is the projection of a smooth locally trivial fibration.
\medskip
\item[$(ii)$] For each $t \in (\B_\eta^n \backslash \Delta) \cap \Im(f)$ and for each $x \in \big( f^{-1}(t) \cap \BS_\e^{m-1} \big)$, and for any $i = 1, \dots, n$ we have that:
\begin{itemize}
\item[$(a)$] $T_x \big( (f_i)^{-1}(t_i) \big)$ intersects $T_x (\BS_\e^{m-1})$ transversally at $x$;
\item[$(b)$] $T_x \big( (f_i^*)^{-1}(t_i^*) \big)$ intersects $T_x (\BS_\e^{m-1})$ transversally at $x$;
\item[$(e)$] For any $s \in \pi_i^*(\B_\eta^n)$ the restriction:
$$(f_i)_| : (f_i)^{-1} \big( \B_\eta^1 \backslash \pi_i(\Delta \cap H_s) \big) \cap (f_i^*)^{-1}(s) \cap \B_\e^m \to \big( \B_\eta^1 \backslash \pi_i(\Delta \cap H_s) \big) \cap \Im(f_{i |})$$
is the projection of a smooth locally trivial fibration, where $H_s:= (\pi_i^*)^{-1}(s)$ and $\Im(f_{i |})$ is the image of such restriction.
\end{itemize}
\end{itemize}
\end{prop}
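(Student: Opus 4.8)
The plan is to reduce both $(i)$ and $(ii)$ to a single pointwise transversality statement and then translate back and forth with Ehresmann's fibration lemma. Fix $i$; at a point $x$ with $t = f(x)$ abbreviate $A := (f_i^*)^{-1}(t_i^*)$, $B := (f_i)^{-1}(t_i)$ and $N := \BS_\e^{m-1}$, so that $f^{-1}(t) = A \cap B$ near $x$. Let $(T)$ denote the condition that $f^{-1}(t)$ meets $N$ transversally at every $x \in f^{-1}(t) \cap \BS_\e^{m-1}$, for every $t \in (\B_\eta^n \backslash \Delta) \cap \Im(f)$, and let $(c), (d)$ be the conditions of Lemma \ref{lemma_1}. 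I will prove the chain $(i) \Leftrightarrow (T) \Leftrightarrow (a)+(b)+(c) \Leftrightarrow (a)+(b)+(d) \Leftrightarrow (a)+(b)+(e)$. For $(i) \Leftrightarrow (T)$: since $t \notin \Delta$, the map $f$ is already a submersion on $f^{-1}(\B_\eta^n \backslash \Delta) \cap \mathring{\B}_\e^m$, and since $\B_\e^m$ is compact the restriction $f_|$ is proper with compact fibres $f^{-1}(t) \cap \B_\e^m$. By the manifold-with-boundary version of Ehresmann's fibration lemma, $f_|$ is a locally trivial fibration exactly when, in addition, $f$ restricted to the boundary sphere $\BS_\e^{m-1}$ is a submersion along each fibre, which is precisely $(T)$.

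Next I would establish $(T) \Leftrightarrow (a)+(b)+(c)$ pointwise, for the fixed $i$. At a regular point of $f$ one has $T_x(f^{-1}(t)) = T_x(A) \cap T_x(B)$, which is contained in both $T_x(A)$ and $T_x(B)$; hence $(T)$ immediately forces $(a)$ and $(b)$. Writing $f|_N = (f_i|_N, f_i^*|_N)$, condition $(T)$ says exactly that this pair is a submersion at $x$, and such a pair is a submersion if and only if its second factor is a submersion (this is $(b)$) together with the restriction of $f_i$ to a fibre of $f_i^*|_N$ being a submersion at $x$; granting $(a)$ and $(b)$, a short dimension count shows this last condition is equivalent to $(c)$, so that $(b)+(c) \Rightarrow (T)$ (one may also obtain $(T) \Leftrightarrow (a)+(b)+(c)$ directly from Lemma \ref{lemma_01} applied with these $A$, $B$, $N$). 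Under the standing hypothesis $(a)$, Lemma \ref{lemma_1} then replaces $(c)$ by $(d)$, giving $(a)+(b)+(c) \Leftrightarrow (a)+(b)+(d)$.

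It then remains to trade $(d)$ for the fibration statement $(e)$. Once $(b)$ holds, the slice $S_s := (f_i^*)^{-1}(s) \cap \B_\e^m$ is a compact manifold with boundary $(f_i^*)^{-1}(s) \cap \BS_\e^{m-1}$, and a short computation with the modular law shows that inside $S_s$ the fibre $(f_i)^{-1}(t_i) \cap S_s$ meets this boundary transversally if and only if $(d)$ holds at $x$. Invoking Remark \ref{remark_1} to identify the values where $f_i|_{(f_i^*)^{-1}(s)}$ fails to be a submersion with $\pi_i(\Delta \cap H_s)$, so that $f_i|_{S_s}$ is an interior submersion after removing exactly that set, Ehresmann's lemma applied on $S_s$ shows that $(e)$ is equivalent to $(b)$ together with $(d)$ at every boundary point of $S_s$. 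Chaining the four equivalences yields $(i) \Leftrightarrow (ii)$.

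The step I expect to be the main obstacle is not the transversality linear algebra but the bookkeeping of discriminants and quantifiers: one must verify that the range ``all $t$ and all $x \in f^{-1}(t) \cap \BS_\e^{m-1}$'' appearing in $(ii)$ matches the range ``all $s$ and all boundary points of $S_s$'' implicit in $(e)$, and that Remark \ref{remark_1} decomposes $\Sigma(f)$ precisely enough that deleting $\Delta$ in $(i)$ and $\pi_i(\Delta \cap H_s)$ in $(e)$ leaves exactly the loci on which the relevant maps are submersions. Properness and the manifold-with-boundary form of Ehresmann's lemma are routine once one uses the compactness of $\B_\e^m$.
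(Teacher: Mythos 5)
Your proof is correct and takes essentially the same route as the paper's: the same decomposition via Remark \ref{remark_1}, the same exchange of condition $(c)$ for $(d)$ through Lemma \ref{lemma_1}, two applications of Ehresmann's lemma (on the tube and on the slice), and the same discriminant identity $t \in \Delta \Leftrightarrow t_i \in \pi_i(\Delta \cap H_{t_i^*})$. One claim should be stated more carefully: condition $(b)$ does \emph{not} make the whole slice $(f_i^*)^{-1}(s) \cap \B_\e^m$ a smooth manifold with boundary (the value $s$ may be critical for $f_i^*$ at points lying over $\Delta$); what is true, and what the paper verifies in its steps $(I)$ and $(II)$ using Remark \ref{remark_1}, is that the part of the slice lying over $\B_\eta^1 \backslash \pi_i(\Delta \cap H_s)$ is smooth --- which is exactly the bookkeeping you flagged as the main point to check.
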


\begin{proof}
By Remark \ref{remark_1}, for any $x \in f^{-1}(\B_\eta^n \backslash \Delta)$ and for any $i=1, \dots, n$ we have that $x$ is a regular point of both $f_i$ and $f_i^*$ and that $T_x \big( (f_i)^{-1}(f_i(x)) \big)$ intersects $T_x \big( (f_i^*)^{-1}(f_i^*(x)) \big)$ transversally in $T_x(\R^m)$.

\begin{itemize}
\medskip
\item[$\bullet$] Clearly $(i)$ implies that $f^{-1}(t)$ intersects $\BS_\e^{m-1}$ transversally, for any $t \in (\B_\eta^n \backslash \Delta) \cap \Im(f)$. Since for any $x \in f^{-1}(t) \cap \BS_\e^{m-1}$, with $t \in (\B_\eta^n \backslash \Delta) \cap \Im(f)$, and for any $i=1, \dots, n$ fixed we have that: 
$$T_x \big( (f_i)^{-1}(t_i) \big) \cap T_x \big( (f_i^*)^{-1}(t_i^*) \big) = T_x \big( f^{-1}(t) \big) \, ,$$
it follows that both $T_x \big( (f_i)^{-1}(t_i) \big)$ and $T_x \big( (f_i^*)^{-1}(t_i^*) \big)$ intersect $\BS_\e^{m-1}$ transversally at $x$. These are the conditions $(a)$ and $(b)$ above. 

\medskip
\noindent Moreover, $(i)$ also implies that the restriction of $f$ to the sphere $\BS_\e^{m-1}$ is a submersion. By Remark \ref{remark_1}, this implies that the restriction of $f_i$ to $\BS_\e^{m-1}$ and the restriction of $f_i^*$ to $\BS_\e^{m-1}$ are submersions, and that:
$$[ T_x \big( (f_i^*)^{-1}(t_i^*) \big) \cap T_x (\BS_\e^{m-1}) ] \ \textrm{intersects} \ [ T_x \big( (f_i)^{-1}(t_i) \big) \cap T_x (\BS_\e^{m-1}) ] \ \textrm{transversally  in} \ T_x(\BS_\e^{m-1}) \, , $$
which is the condition $(c)$ of Lemma \ref{lemma_1}. So it follows from that lemma that: 
$$[ T_x \big( (f_i^*)^{-1}(t_i^*) \big) \cap T_x (\BS_\e^{m-1}) ] \ \textrm{intersects} \ T_x \big( (f_i)^{-1}(t_i) \big) \ \textrm{transversally  in} \ T_x(\R^m) \, , \leqno(*)$$
for any $x \in f^{-1}(t) \cap \BS_\e^{m-1}$ with $t \in (\B_\eta^n \backslash \Delta) \cap \Im(f)$.

\medskip
\noindent Now fix some $s \in \pi_i^*(\B_\eta^n)$ and set:
$$M:= (f_i)^{-1} \big( \B_\eta^1 \backslash \pi_i(\Delta \cap H_s) \big) \cap (f_i^*)^{-1}(s) \, .$$
We are going to show that $M$ is a smooth manifold. In fact, we know that both $(f_i^*)^{-1}(s) \setminus \Sigma(f_i^*)$ and $(f_i)^{-1}(\B_\eta^1) \setminus \Sigma(f_i)$ are smooth manifolds. So we just have to show that:

\medskip
\begin{itemize}
\item[$(I)$] $\Sigma(f_i^*) \cap (f_i^*)^{-1}(s)$ is contained in $(f_i)^{-1}\big(\pi_i(\Delta \cap H_s)\big)$; and:
\item[$(II)$] $\Sigma(f_i) \cap (f_i^*)^{-1}(s)$ is contained in $(f_i)^{-1}\big(\pi_i(\Delta \cap H_s)\big)$.
\end{itemize}

\medskip
\noindent To show $(I)$, take $x \in \Sigma(f_i^*) \cap (f_i^*)^{-1}(s)$. Then $x \in \Sigma(f)$ and hence $f(x) \in \Delta$. But we also have that $f(x) \in H_s$ and hence $f(x) \in (\Delta \cap H_s)$. Therefore $f_i(x) \in \pi_i(\Delta \cap H_s)$.

\medskip
\noindent To show $(II)$, take $x \in \Sigma(f_i) \cap (f_i^*)^{-1}(s)$. Then $x \in \Sigma(f)$ and hence $f(x) \in (\Delta \cap H_s)$. Therefore $f_i(x) \in \pi_i(\Delta \cap H_s)$.

\medskip
\noindent Now we want to apply the Ehresmann's fibration lemma to show that the restriction:
$$(f_i)_| : (f_i)^{-1} \big( \B_\eta^1 \backslash \pi_i(\Delta \cap H_s) \big) \cap (f_i^*)^{-1}(s) \cap \B_\e^m \to \big( \B_\eta^1 \backslash \pi_i(\Delta \cap H_s) \big) \cap \Im(f_{i |})$$
is the projection of a locally trivial fibration, where $\Im(f_{i |}) = f_i(M \cap \B_\e)$ is the image of such restriction. So besides having the property $(II)$ above, we also have to show that:

\medskip
\begin{itemize}
\item[$(III)$] for any $t_i \in \big( \B_\eta^1 \backslash \pi_i(\Delta \cap H_s) \big) \cap \Im(f_{i |})$ and for any $x \in (f_i)^{-1}(t_i) \cap (f_i^*)^{-1}(s) \cap \B_\e^m$, one has that $(f_i)^{-1}(t_i)$ intersects $(f_i^*)^{-1}(s)$ transversally at $x$;
\item[$(IV)$] for any $t_i \in \big( \B_\eta^1 \backslash \pi_i(\Delta \cap H_s) \big) \cap \Im(f_{i |})$ and for any $x \in (f_i)^{-1}(t_i) \cap (f_i^*)^{-1}(s) \cap \BS_\e^{m-1}$, one has that $(f_i)^{-1}(t_i) \cap (f_i^*)^{-1}(s)$ intersects $\BS_\e^{m-1}$ transversally at $x$.
\end{itemize}

\medskip
\noindent To show $(III)$, let $t_i$ be a point in $\B_\eta^1 \cap \Im(f_{i |})$ and suppose that there exists $x \in (f_i)^{-1}(t_i) \cap (f_i^*)^{-1}(s) \cap \B_\e^m$ such that $(f_i)^{-1}(t_i)$ intersects $(f_i^*)^{-1}(s)$ not transversally at $x$. This means that $(f_i)^{-1}(f_i(x))$ intersects $(f_i^*)^{-1}\big( (f_i^*)(x) \big)$ not transversally at $x$. So $x \in \Sigma(f)$, which implies that $f(x) \in (\Delta \cap H_s)$. So $t_i = f_i(x)$ belongs to $\pi_i(\Delta \cap H_s)$.

\medskip
\noindent To show $(IV)$, let $t_i$ be a point in $\B_\eta^1 \cap \Im(f_{i |})$ and suppose that there exists $x \in (f_i)^{-1}(t_i) \cap (f_i^*)^{-1}(s) \cap \BS_\e^{m-1}$ such that $(f_i)^{-1}(t_i) \cap (f_i^*)^{-1}(s)$ intersects $\BS_\e^{m-1}$ not transversally at $x$. Then $(f_i)^{-1}(f_i(x)) \cap (f_i^*)^{-1}\big( (f_i^*)(x) \big)$ intersects $\BS_\e^{m-1}$ not transversally at $x$. So it follows from $(*)$ that $f(x) \in \Delta$. Hence $f(x) \in (\Delta \cap H_s)$ and so $t_i \in \pi_i(\Delta \cap H_s)$.

\medskip
\item[$\bullet$] Now let us show that $(ii)$ implies $(i)$. Notice that for any $t \in \D_\eta^n$ it is true that:
$$t_i \in \pi_i(\Delta \cap H_{t_i^*}) \Leftrightarrow t \in (\Delta \cap H_{t_i^*}) \Leftrightarrow t \in \Delta \, .$$
Now fix an arbitrary $t \in (\B_\eta^n \backslash \Delta) \cap \Im(f)$. Since $t_i \notin \pi_i(\Delta \cap H_{t_i^*})$ it follows from the hypothesis that $(f_i)^{-1}(t_i)$ intersects $(f_i^*)^{-1}(t_i^*) \cap \BS_\e^{m-1}$ transversally. Hence the conditions $(a)$ and $(d)$ of Lemma \ref{lemma_1} are satisfied. So it follows from that lemma that the condition $(c)$ is also satisfied. This gives that the restriction of $f$ to $\BS_\e^{m-1}$ is a submersion (see Remark \ref{remark_1}). So the result follows from Ehresmann's fibration lemma.
\end{itemize}

\end{proof}

\begin{remark}
Actually, we proved that if there is some $i \in \{1, \dots, n\}$ such that for each $t \in (\B_\eta^n \backslash \Delta) \cap \Im(f)$ and for each $x \in \big( f^{-1}(t) \cap \BS_\e^{m-1} \big)$ we have that the conditions $(a)$, $(b)$ and $(e)$ are satisfied, then the conditions $(i)$ is also satisfied, that is, $f$ has a L\^e-Milnor fibration.
\end{remark}

\medskip
\section{Map-germs with isolated critical value}

Now let $f: (\R^m,0) \to (\R^n,0)$, with $1<n<m$, be a real analytic map-germ with an isolated critical value. This means that there exist positive real numbers $\e$ and $\eta_0$ such that the restriction: 
$$f_|: f^{-1}(\mathring{\B}_{\eta_0}^n \backslash \{0\}) \cap \mathring{\B}_\e^m \to \mathring{\B}_{\eta_0}^n \backslash \{0\}$$
is a submersion.

This implies that there is no critical point of $f_i^*$ in $f^{-1}(\mathring{\B}_{\eta_0}^n \backslash \{0\}) \cap \mathring{\B}_\e^m$, for any $i=1, \dots, n$. But this does not mean that there is no critical point of $f_i^*$ in $(f_i^*)^{-1}(\mathring{\B}_{\eta_0}^{n-1} \backslash \{0\}) \cap \mathring{\B}_\e^m$, since $f^{-1}(\mathring{\B}_{\eta_0}^n) \cap \mathring{\B}_\e^m$ is properly contained in $(f_i^*)^{-1}(\mathring{\B}_{\eta_0}^{n-1}) \cap \mathring{\B}_\e^m$.

Nevertheless, if we take $\e_0>0$ sufficiently small such that: 
$$\B_{\e_0}^m \subset f^{-1}(\mathring{\B}_{\eta_0}^n) \cap \B_\e^m \, ,$$
we get that there is no critical point of $f_i^*$ in $(f_i^*)^{-1}(\mathring{\B}_{\eta_0}^{n-1} \backslash \{0\}) \cap \B_{\e_0}^m$. Also, there is no critical point of $f_i$ in $(f_i)^{-1}(\B_{\eta_0}^1 \backslash \{0\}) \cap \mathring{\B}_{\e_0}^m$.

\medskip
So we have:

\begin{lemma} \label{lemma_icv}
Let $f: (\R^m,0) \to (\R^n,0)$, with $1<n<m$, be a real analytic map-germ with an isolated critical value. Then there exist positive real numbers $\e_0$ and $\eta_0$ such that, for any $\e$ and for any $\eta$ with $0< \e \leq \e_0$ and $0< \eta \leq \eta_0$, the restrictions: 
$$f_|: f^{-1}(\mathring{\B}_\eta^n \backslash \{0\}) \cap \mathring{\B}_\e^m \to \mathring{\B}_\eta^n \backslash \{0\} \, ,$$
$$(f_i^*)_|: (f_i^*)^{-1}(\mathring{\B}_\eta^{n-1} \backslash \{0\}) \cap \mathring{\B}_\e^m \to \mathring{\B}_\eta^{n-1} \backslash \{0\} \, ,$$
and
$$(f_i)_|: (f_i)^{-1}(\mathring{\B}_\eta^1 \backslash \{0\}) \cap \mathring{\B}_\e^m \to \mathring{\B}_\eta^1 \backslash \{0\} \, ,$$
are submersions, for every $i=1, \dots, n$.
\end{lemma}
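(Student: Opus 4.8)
The plan is to deduce all three families of submersions from the single hypothesis that $f$ is a submersion on a punctured tube, transferring regularity from $f$ to its partial maps via Remark \ref{remark_1} and controlling the dropped coordinate by shrinking the ambient ball. Let $\e$ and $\eta_0$ be the radii provided by the definition of isolated critical value, so that $f$ has no critical point in $f^{-1}(\mathring{\B}_{\eta_0}^n \backslash \{0\}) \cap \mathring{\B}_\e^m$. By Remark \ref{remark_1} we have $\Sigma(f_i) \subset \Sigma(f)$ and $\Sigma(f_i^*) \subset \Sigma(f)$ for every $i$, so on this same set neither $f_i$ nor $f_i^*$ has a critical point. This already disposes of the first restriction (modulo the monotonicity step below) and will be the engine for the other two.

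The delicate point is that the domains in the lemma for $f_i$ and $f_i^*$ are strictly larger than any tube of $f$: a point of $(f_i^*)^{-1}(\mathring{\B}_{\eta_0}^{n-1} \backslash \{0\})$ has its retained coordinates small but leaves the dropped coordinate $f_i$ free, so it need not lie in $f^{-1}(\mathring{\B}_{\eta_0}^n)$. To control this I would use the continuity of $f$ at $0$, together with $f(0)=0$, to choose $\e_0 \in (0, \e]$ with $\B_{\e_0}^m \subset f^{-1}(\mathring{\B}_{\eta_0}^n) \cap \B_\e^m$. Then for any $x \in \B_{\e_0}^m$ with $f_i^*(x) \neq 0$ one has $f(x) \in \mathring{\B}_{\eta_0}^n$ automatically and $f(x) \neq 0$ (since $\pi_i^*(f(x)) \neq 0$ forces $f(x) \neq 0$), whence $x \in f^{-1}(\mathring{\B}_{\eta_0}^n \backslash \{0\}) \cap \mathring{\B}_\e^m$ and $x$ is a regular point of $f_i^*$. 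The identical argument with $\pi_i$ replacing $\pi_i^*$ shows that $f_i$ has no critical point in $(f_i)^{-1}(\mathring{\B}_{\eta_0}^1 \backslash \{0\}) \cap \B_{\e_0}^m$.

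Finally I would pass to all radii $\leq \e_0$ and $\leq \eta_0$ by monotonicity: shrinking either radius only shrinks each of the three domains, keeping it inside the corresponding critical-point-free region established above, so all three restrictions remain submersions. I expect no real obstacle beyond the continuity step of the second paragraph, whose sole role is to decouple the smallness of the dropped coordinate from that of the retained ones; everything else is the critical-set containment of Remark \ref{remark_1} and set-theoretic monotonicity.
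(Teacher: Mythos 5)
Your proposal is correct and follows essentially the same route as the paper's own argument: unpack the definition of isolated critical value to get a punctured tube on which $f$ is a submersion, use the containments $\Sigma(f_i) \subset \Sigma(f)$ and $\Sigma(f_i^*) \subset \Sigma(f)$ from Remark \ref{remark_1}, and then shrink to an $\e_0$ with $\B_{\e_0}^m \subset f^{-1}(\mathring{\B}_{\eta_0}^n) \cap \B_\e^m$ precisely to handle the fact that the domains of $(f_i)_|$ and $(f_i^*)_|$ are strictly larger than any tube of $f$. The only cosmetic point is that one should take $\e_0$ strictly smaller than $\e$ so that $\B_{\e_0}^m \subset \mathring{\B}_\e^m$, a triviality present in the paper's own write-up as well.
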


\medskip
So now we can prove Theorem \ref{theo_main}:

\medskip
\noindent {\it Proof of Theorem \ref{theo_main}.} Let $\e>0$ be small enough as in Lemma \ref{lemma_icv}. We will show that $(i)$ is equivalent to $(ii)$ and that $(i) \implies (iii) \implies (iv) \implies (ii)$.

\medskip
\begin{itemize}
\item[$\bullet$] Let us show that $(i) \implies (ii)$. By Proposition \ref{prop_3}, for each $t \in (\B_\eta^n \backslash \{0\}) \cap \Im(f)$ and for each $x \in \big( f^{-1}(t) \cap \BS_\e^{m-1} \big)$, and for any $i = 1, \dots, n$ we have that:
\medskip
\begin{itemize}
\item[$(a)$] $T_x \big( (f_i)^{-1}(t_i) \big)$ intersects $T_x (\BS_\e^{m-1})$ transversally at $x$;
\item[$(b)$] $T_x \big( (f_i^*)^{-1}(t_i^*) \big)$ intersects $T_x (\BS_\e^{m-1})$ transversally at $x$;
\item[$(e)$] If $t_i^* \neq 0$ then the restriction:
$$(f_i)_| : (f_i)^{-1}(\B_\eta^1) \cap (f_i^*)^{-1}(t_i^*) \cap \B_\e^m \to \B_\eta^1 \cap \Im(f_{i |})$$
is the projection of a smooth locally trivial fibration.
\end{itemize}

\medskip
\noindent But conditions $(a)$ and $(b)$ together with Lemma \ref{lemma_icv} and with Ehresmann's fibration lemma give that the restrictions:
$$(f_i^*)_|: (f_i^*)^{-1}(\B_\eta^{n-1} \backslash \{0\}) \cap \B_\e^m \to (\B_\eta^{n-1} \backslash \{0\}) \cap \Im(f_i^*)$$
and
$$(f_i)_|: (f_i)^{-1}(\B_\eta^1 \backslash \{0\}) \cap \B_\e^m \to (\B_\eta^1 \backslash \{0\}) \cap \Im(f_i)$$
are projections of smooth locally trivial fibrations. The converse $(ii) \implies (i)$ is an easy exercise, using Lemma \ref{lemma_1}.

\medskip
\item[$\bullet$] Let us show that $(i) \implies (iii)$. We will proceed by induction on $n$.

\medskip
\noindent Let $f: (\R^m,0) \to (\R^2,0)$ be as above and suppose that the restriction $f_|: f^{-1}(\B_\eta^2 \backslash \{0\}) \cap \B_\e^m \to (\B_\eta^2 \backslash \{0\}) \cap \Im(f)$ is the projection of a locally trivial fibration. Since $(i) \implies (ii)$, we have that for any $i=1,2$ and for any $t \in (\B_\eta^2 \backslash \{0\}) \cap \Im(f)$ with $t_i^* \neq 0$, the restrictions:
\medskip
\begin{itemize}
\item[$-$] $(f_i)_|: (f_i)^{-1}(\B_\eta^1 \backslash \{0\}) \cap \B_\e^m \to (\B_\eta^1 \backslash \{0\}) \cap \Im(f_i)$
\item[$-$] $(f_i)_| : (f_i)^{-1}(\B_\eta^1) \cap (f_i^*)^{-1}(t_i^*) \cap \B_\e^m \to \B_\eta^1 \cap \Im(f_i)$
\end{itemize}
\medskip
are projections of smooth locally trivial fibrations. In particular, for any $i=1, 2$ we have that $(f_i)^{-1}(t_i) \cap \B_\e^m$ is independent of $t_i \in (\B_\eta^1 \backslash \{0\}) \cap \Im(f_i)$. Moreover, for any $t= (t_1, t_1) \in (\B_\eta^2 \cap \Im(f))$ with $t_1 \neq 0$ we have that $(f_1)^{-1}(t_1) \cap \B_\e^m = (f_2^*)^{-1}(t_1) \cap \B_\e^m$ is diffeomorphic to $[f^{-1}(t) \cap \B_\e^m] \times \B_\eta^1$, which is diffeomorphic to $(f_1^*)^{-1}(t_1) \cap \B_\e^m = (f_2)^{-1}(t_1) \cap \B_\e^m$. 

\medskip
\noindent Now suppose that the result is true for $n-1$. Let $f: (\R^m,0) \to (\R^n,0)$ be such that the restriction $f_|: f^{-1}(\B_{\eta}^n \backslash \{0\}) \cap \B_\e^m \to (\B_{\eta}^n \backslash \{0\}) \cap \Im(f)$ is the projection of a smooth locally trivial fibration. Since $(i) \implies (ii)$, we have that the restriction $(f_i^*)_|: (f_i^*)^{-1}(\B_{\eta}^{n-1} \backslash \{0\}) \cap \B_\e^m \to (\B_{\eta}^{n-1} \backslash \{0\}) \cap \Im(f_i^*)$ is the projection of a locally trivial fibration, for any $i = 1, \dots, n$. 

\medskip
\noindent So we can apply the induction hypothesis to $f_j^*$, for some $j \in \{1, \dots, n\}$ fixed. We get that the diffeomorphism type of $(f_i)^{-1}(t_i) \cap \B_\e^m$ is independent of $i$, for any $i= \{1, \dots, n\} \setminus \{j\}$ and for any $t_i \in \B_{\eta}^1 \backslash \{0\}$, and also that it is diffeomorphic to $[(f_j^*)^{-1}(t_j^*) \cap \B_\e^m] \times \B_{\eta}^{n-2}$, for any $t_j^* \in \B_{\eta} \backslash \{0\}$. But by condition $(e)$ we have that $(f_j^*)^{-1}(t_j^*) \cap \B_\e^m$ is diffeomorphic to $[f^{-1}(t) \cap \B_\e^m] \times \B_{\eta}^1$. So $(f_i)^{-1}(t_i) \cap \B_\e^m$ is diffeomorphic to $[f^{-1}(t) \cap \B_\e^m] \times \B_{\eta}^{n-1}$.

\medskip
\noindent We can also apply the induction hypothesis to $f_k^*$, for some $k \in \{1, \dots, n\} \backslash \{j\}$. This gives that the diffeomorphism type of $(f_i)^{-1}(t_i) \cap \B_\e^m$ is independent of $i$, for any $i= \{1, \dots, n\} \backslash \{k\}$ and for any $t_i \in (\B_\eta^1 \backslash \{0\}) \cap \Im(f_i)$.

\medskip
\item[$\bullet$] The implication $(iii) \implies (iv)$ is obvious.

\medskip
\item[$\bullet$] Let us show that $(iv) \implies (ii)$. We will proceed by induction on $n$ again.

\medskip
First let us show that the result is true for $n=1$. That is, let $f: (\R^m,0) \to (\R,0)$ be a real analytic function-germ with isolated critical value such that the Euler characteristic of $f^{-1}(t) \cap \BS_\e^{m-1}$ is constant, for any $t \in (\B_\eta^1 \backslash \{0\}) \cap \Im(f)$. By Morse theory, this implies that the restriction:
$$f_|: f^{-1}(\B_\eta^1 \backslash \{0\}) \cap \BS_\e^{m-1} \to \B_\eta^1 \backslash \{0\}$$
is a submersion. Hence it follows from Ehresmann's fibration lemma that the restriction $f_|: f^{-1}(\B_\eta^1 \backslash \{0\}) \cap \B_\e^m \to \B_\eta^1 \backslash \{0\} \cap \Im(f)$ is the projection of a smooth locally trivial fibration.

\medskip
\noindent Now let $f: (\R^m,0) \to (\R^n,0)$ be a real analytic map-germ with isolated critical value and with $n>1$, such that the Euler characteristic of $f^{-1}(t) \cap \BS_\e^{m-1}$ is constant, for any $t \in (\B_{\eta}^n \backslash \{0\}) \cap \Im(f)$. Suppose that the result is true for $n-1$. 

\medskip
\noindent So for any $i=1, \dots, n$ fixed and for $t_i^* \neq 0$, it follows from Morse theory that the restriction:
$$(f_i)_|: (f_i)^{-1}(\B_{\eta}^1) \cap (f_i^*)^{-1}(t_i^*) \cap \BS_\e^{m-1} \to \B_{\eta}^1 \cap \Im(f_i)$$
is a submersion. But since the restriction:
$$(f_i)_|: (f_i)^{-1}(\B_{\eta}^1) \cap (f_i^*)^{-1}(t_i^*) \cap \mathring{\B}_\e^m \to \B_{\eta}^1 \cap \Im(f_i)$$
is also a submersion, it follows from the Ehresmann's fibration lemma that the restriction:
$$(f_i)_| : (f_i)^{-1}(\B_{\eta}^1) \cap (f_i^*)^{-1}(t_i^*) \cap \B_\e^m \to \B_{\eta}^1 $$
is a smooth locally trivial fibration over its image. In particular, this implies that for any $t_i^* \in (\B_\eta^{n-1} \backslash \{0\}) \cap \Im(f_i^*)$ the Euler characteristic of $(f_i^*)^{-1}(t_i^*) \cap \BS_\e^{m-1}$ equals the Euler characteristic of some $f^{-1}(t) \cap \BS_\e^{m-1}$, which is constant by hypothesis.

\medskip
\noindent Hence we can apply the induction hypothesis on the map $f_i^*$. So the restriction:
$$(f_i^*)_|: (f_i^*)^{-1}(\B_{\eta}^{n-1} \backslash \{0\}) \cap \B_\e^m \to (\B_{\eta}^{n-1} \backslash \{0\}) \cap \Im(f_i^*)$$
is the projection of a smooth locally trivial fibration. 

\medskip
\noindent Doing this procedure successively, we obtain that, for any $i=1, \dots, n$, the restriction:
$$(f_i)_|: (f_i)^{-1}(\B_{\eta}^1 \backslash \{0\}) \cap \B_\e^m \to (\B_{\eta}^1 \backslash \{0\}) \cap \Im(f_i)$$
is the projection of smooth locally trivial fibrations. This finishes the proof.
\end{itemize}

\medskip
Applying Theorem \ref{theo_main} successively, we obtain Corollary \ref{cor_2}.

\section{On the Euler characteristic with compact support of the link}

\medskip
Recall the Euler characteristic with compact support $\chi_C(X)$ of a triangulable space $X$, which is defined as the alternating sum:
$$\chi_C(X) := \sum_{k \geq 0} (-1)^k \dim_\R H_C^k(X,\R)  \, ,$$
where $H_C^\bullet (X,\R)$ denotes the cohomology with compact supports and real coefficients of the space $X$. 

Also recall that the Euler characteristic with compact support has the following properties (see \cite{ER} and \cite{GZ} for instance):

\begin{itemize}
\item[$\circ$] $\chi_C(\{point\}) =1$  ;
\item[$\circ$] $\chi_C(X) = \chi_C(Y)$ if $X$ is homeomorphic to $Y$;
\item[$\circ$] $\chi_C(X) = \chi_C(Y)$ for any homotopic compact spaces $X$ and $Y$;
\item[$\circ$] $\chi_C(X) = \chi_C(A) + \chi_C(X \backslash A)$ for every closed subset $A \subset X$;
\item[$\circ$] $\chi_C(X \times Y) = \chi_C(X) \chi_C(Y)$;
\item[$\circ$] If $M$ is an $n$-dimensional manifold (not necessarily compact), then $\chi_C(M) = (-1)^n \chi(M)$, where $\chi(M)$ is the Euler characteristic of $M$.
\end{itemize}
So, for any $n \in \mathbb{N}$, one has that:
$$
\left\{
\begin{array}{ccc}
\chi_C(\B^n) & = & 1 \\
\chi_C(\mathring{\B}^n) = \chi_C(\R^n) & = & (-1)^n \\
\chi_C(\BS^n) & = & 1 + (-1)^n \\
\end{array}
\right.
$$
 
\medskip
Now, let $f: (\R^m,0) \to (\R^n,0)$ be a real analytic map-germ, with $1<n<m$, and suppose that $f$ has isolated critical value and a L\^e-Milnor fibration in the tube. Let $F$ denote the Milnor fiber of $f$, whose interior is denoted by $\mathring{F}$ and whose boundary is denoted by $\partial F$.

Consider the real analytic function-germ:
$$
\begin{array}{cccc}
\| f \| \ : & \! (\R^m,0) & \! \longrightarrow & \! (\R,0) \\
& \! x & \! \longmapsto & \! \|f(x)\|
\end{array}
\, .$$
By Lemma \ref{lemma_0}, the function-germ $ \| f \|$ has a L\^e-Milnor fibration with Milnor fiber: 
$$H_s := (\|f\|)^{-1}(s) \cap \B_\e^m \, ,$$
where $s$ is a point in the semi-interval $(0,\eta]$. We set $\mathring{H}_s := (\|f\|)^{-1}(s) \cap \mathring{\B}_\e^m$, the interior of $H_s$, and we also set $K:=  (\|f\|)^{-1}(0) \cap \BS_\e^{m-1}$, the link of $\|f\|$, which coincides with the link of $f$.

Since $ (\|f\|)^{-1}\big( [0,\eta) \big) \cap \mathring{\B}_\e^m$ is homeomorphic to an open ball in $\R^m$, it follows that:
$$\chi_C \big(  (\|f\|)^{-1}\big( [0,\eta) \big) \cap \mathring{\B}_\e^m \big) = (-1)^m \, .$$

\medskip
So if $\mathring{H}_0 := (\|f\|)^{-1}(0) \cap \mathring{\B}_\e^m$ we have that:

\begin{equation} \label{eq1}
\chi_C \big(  (\|f\|)^{-1}\big( (0,\eta) \big) \cap \mathring{\B}_\e^m \big) + \chi_C(\mathring{H}_0) = (-1)^m \, .
\end{equation}

\medskip
On the one hand, we know that $H_0$ is homeomorphic to the cone over $K$, and hence: 
$$\chi_C(\mathring{H}_0) = -\chi_C(K) +1 \, .$$
On the other hand, since the restriction of $ \| f \|$ to $ (\|f\|)^{-1}\big( (0,\eta) \big) \cap \mathring{\B}_\e^m$ is a locally trivial fibration over $(0, \eta)$, we have that: 
$$\chi_C \big(  (\|f\|)^{-1}\big( (0,\eta) \big) \cap \mathring{\B}_\e^m \big) = - \chi_C(\mathring{H}_s) \, .$$
Putting this on equation (\ref{eq1}) we obtain:

\begin{equation} \label{eq2}
\chi_C(K) = -\chi_C(\mathring{H}_s) + 1 + (-1)^{m+1}\, .
\end{equation}

\medskip
We also know that $\mathring{H}_s$ is a fiber bundle over $\BS^{n-1}$ with fiber $\mathring{F}$. Hence we have:

\begin{equation} \label{eq3}
\chi_C(\mathring{H}_s) = \big( 1+(-1)^{n-1} \big) \chi_C(\mathring{F}) \, .
\end{equation}

\medskip
So equations (\ref{eq2}) and (\ref{eq3}) together give:
$$\chi_C(K) = -\big( 1+(-1)^{n-1} \big) \chi_C(\mathring{F}) + \big( 1+(-1)^{m+1} \big)   \, .$$
That is:

\begin{equation} \label{eq4}
\chi_C(K) =
\begin{cases}
\hspace{1.2cm} 0 \hspace{1.2cm} , \ {\text if} \ m \in 2 \Z \ \ {\text and} \ n \in 2 \Z \ ; \\
\hspace{.42cm} -2 \chi_C(\mathring{F}) \hspace{.42cm} , \ {\text if} \ m \in 2 \Z \ \ {\text and} \ n \in 2 \Z+1 \ ; \\
\hspace{1.21cm} 2 \hspace{1.21cm} , \ {\text if} \ m \in 2 \Z+1 \ \ {\text and} \ n \in 2 \Z \ ; \\
-2 \chi_C(\mathring{F})  +2 \ , \ {\text if} \ m \in 2 \Z+1 \ \ {\text and} \ n \in 2 \Z+1 \ . \\
\end{cases} 
\end{equation}

\medskip
Since $\chi_C(\mathring{F}) = (-1)^{m-n} \chi(\mathring{F}) = (-1)^{m-n} \chi(F)$, this proves Theorem \ref{theo_link} for $n>1$. For $n=1$, one just has to substitute equation \ref{eq3} by the equation:

\begin{equation} \label{eq3}
\chi_C(\mathring{H}_s) = \chi_C(\mathring{F}_+) + \chi_C(\mathring{F}_-) \, .
\end{equation}

This finishes the proof of Theorem \ref{theo_link}.

\medskip
Now let us restrict to the case when $f$ has an isolated critical point at $0 \in \R^m$.

\medskip
Suppose that $n>1$ and that $m$ is odd. If we further suppose that $n$ is odd, we have that $\chi(K) =0$, so it follows from Theorem \ref{theo_link} that $\chi(F)  = 1$. This, together with Theorem \ref{theo_main}, proves $(i)$ of Corollary \ref{cor_EC}.

\medskip
Now suppose that $n=1$ and that $m$ is odd. Then again we have that $\chi(K) =0$, so Theorem \ref{theo_link} gives that $\chi(F_+) + \chi(F_-) = 2$, which is $(ii)$ of Corollary \ref{cor_EC}.

\medskip
Finally, suppose that $n=1$ and that $m$ is even. A well-known topological argument gives that: 
$$2 \chi(F_+) = \chi(\partial F_+) = \chi(K) = \chi(\partial F_-) = 2 \chi(F_-) \, .$$
Hence $\chi(F_+) = \chi(F_-)$, which is $(iii)$ of Corollary \ref{cor_EC}.

\medskip
\section{Answering a question}

\medskip
Let us keep the notation of the previous sections. 

In \cite{CSS2}, Cisneros, Seade and Snoussi introduced the concept of $d$-regularity for a real analytic map-germ $f: (\R^m,0) \to (\R^n,0)$, with $1<n \leq m$ and with an isolated critical value at $0 \in \R^n$. They associated a canonic pencil to $f$, with axis $V:= f^{-1}(0)$, such that the elements of this pencil are all analytic varieties with singular set contained in $V$. The map-germ $f$ is $d$-regular if away from the axis each element of the pencil is transverse to all sufficiently small spheres.

For each $i = 1, \dots, n$ set $V_i^* := (f_i^*)^{-1}(0)$. In \cite{SCG} Cisneros-Molina, Seade and Grulha Jr. proved the following:

\begin{theo} \label{theo_SCG}
Let $f: (\R^m,0) \to (\R^n,0)$, with $1<n<m$, be a complete intersection map-germ with an isolated critical point at $0 \in \R^m$. If $f$ is $d$-regular, then the topology of $V_i^* \backslash V$ is independent of the choice of $i$ and its link, which is a smooth manifold, is diffeomorphic to the disjoint union of two copies of the interior of the Milnor fiber of $f$.
\end{theo}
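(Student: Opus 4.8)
The plan is to realize each set $V_i^*\setminus V$ as a pull-back under $f$ of the $i$-th coordinate axis of $\R^n$ punctured at the origin, and then to read off its link directly from the spherical Milnor fibration of $f$. Writing $L_i:=\R e_i$ for the $i$-th coordinate axis and $K:=V\cap\BS_\e^{m-1}$ for the link of $V$, the first step is the elementary identity
$$V_i^*=f^{-1}(L_i),\qquad V_i^*\setminus V=f^{-1}(L_i\setminus\{0\}),$$
which is immediate from $f_i^*=(f_1,\dots,\widehat{f_i},\dots,f_n)$; in particular $V_i^*$ is exactly the element of the canonical pencil of $f$ associated with the line $L_i$. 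Since $f$ has an isolated critical point, $f_i^*$ is a submersion on $\mathring{\B}_\e^m\setminus\{0\}$, so $V_i^*\setminus V$ is a smooth manifold of dimension $m-n+1$; and since $f$ is $d$-regular, this pencil element is transverse to every small sphere away from its axis $V$. Hence $K_i^*:=(V_i^*\setminus V)\cap\BS_\e^{m-1}$ is a smooth manifold of dimension $m-n$.

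Next I would invoke the spherical fibration attached to a $d$-regular germ. By $d$-regularity (see \cite{CSS2}), the map
$$\phi:=\frac{f}{\|f\|}\colon\ \BS_\e^{m-1}\setminus K\ \longrightarrow\ \BS^{n-1}$$
is the projection of a smooth locally trivial fibration, equivalent to the L\^e--Milnor fibration of $f$ in the tube furnished by Theorem \ref{theo_main}; in particular each fibre $\phi^{-1}(u)$ is diffeomorphic to the interior $\mathring F$ of the Milnor fibre of $f$. The key observation is then the exact identification
$$K_i^*=\phi^{-1}(e_i)\ \sqcup\ \phi^{-1}(-e_i),$$
valid because a point $x\in\BS_\e^{m-1}\setminus K$ lies in $V_i^*\setminus V$ precisely when $f(x)\in L_i\setminus\{0\}$, i.e.\ when $f(x)/\|f(x)\|=\pm e_i$. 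As $n\ge 2$ the base $\BS^{n-1}$ is connected, so all fibres of $\phi$ are mutually diffeomorphic; therefore $K_i^*\cong\mathring F\sqcup\mathring F$ for every $i$, which yields simultaneously that the link is diffeomorphic to two copies of $\mathring F$ and that it does not depend on $i$.

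To upgrade the conclusion from the link to the whole germ $V_i^*\setminus V$, I would use the transversality supplied by $d$-regularity once more: the squared distance function $\rho(x)=\|x\|^2$ has no critical point on $V_i^*\setminus V$ inside $\mathring{\B}_\e^m\setminus\{0\}$, so integrating a gradient-like vector field for $\rho$ tangent to $V_i^*\setminus V$ produces a diffeomorphism $(V_i^*\setminus V)\cap(\mathring{\B}_\e^m\setminus\{0\})\cong K_i^*\times(0,\e)$. Thus the topology of $V_i^*\setminus V$ is entirely determined by $K_i^*$, hence is independent of $i$ as well.

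The main obstacle, and the place where the hypotheses genuinely do the work, is the second step: one must know that $d$-regularity actually produces the smooth fibration $\phi=f/\|f\|$ on $\BS_\e^{m-1}\setminus K$ and that its fibre is truly diffeomorphic (not merely homotopy equivalent) to the interior of the Milnor fibre. This is precisely the content of the $d$-regularity theory of Cisneros-Molina--Seade--Snoussi combined with the tube fibration of Theorem \ref{theo_main}; once it is available, the identification of $K_i^*$ with two fibres of $\phi$ and the connectedness of the base make the ``two copies'' statement and the independence of $i$ immediate. A secondary point requiring care is verifying that $f_i^*$ is a submersion off the origin and that $\rho$ is non-critical on $V_i^*\setminus V$, both of which follow from the isolated-critical-point and $d$-regularity hypotheses.
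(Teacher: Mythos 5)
First, a structural point: this paper never proves Theorem \ref{theo_SCG} at all. It is quoted from \cite{SCG} as background motivating the question answered in Section 5, so there is no internal proof to compare yours against; what follows judges your argument on its own terms. The part of your argument concerning the link is correct, and it is the natural route: $V_i^*=f^{-1}(L_i)$ is the pencil element over the axis $L_i=\R e_i$; Milnor's observation that $f_i^*$ inherits an isolated critical point makes $V_i^*\setminus\{0\}$ smooth; $d$-regularity makes $K_i^*:=(V_i^*\setminus V)\cap\BS_\e^{m-1}$ smooth; and the theorem of \cite{CSS2} (isolated critical value, tube fibration, $d$-regularity) gives that $\phi=f/\|f\|\colon\BS_\e^{m-1}\setminus K\to\BS^{n-1}$ is a locally trivial fibration with fibre diffeomorphic to $\mathring{F}$. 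Since $K_i^*=\phi^{-1}(e_i)\sqcup\phi^{-1}(-e_i)$ and $\BS^{n-1}$ is connected for $n\geq 2$, the link is two copies of $\mathring{F}$, independently of $i$. (Local surjectivity of $f$, which you need so that both fibres are nonempty, holds because $f$ is a submersion along $V\setminus\{0\}$, nonempty by the complete intersection hypothesis; this deserves a sentence.)

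The genuine gap is in your last step, the passage from the link to the germ. You claim that integrating a gradient-like vector field for $\rho$ ``tangent to $V_i^*\setminus V$'' yields a diffeomorphism $(V_i^*\setminus V)\cap(\mathring{\B}_\e^m\setminus\{0\})\cong K_i^*\times(0,\e)$, but $\rho$ restricted to $V_i^*\setminus V$ is \emph{not proper}: $V$ has dimension $m-n>0$, so $V_i^*\setminus V$ accumulates on $V$ inside every annulus $\{a\leq\rho\leq b\}$, and the Ehresmann-type conclusion does not follow merely from the absence of critical points. Concretely, a trajectory of an arbitrary gradient-like field on $V_i^*\setminus V$ can a priori run into $V$ in finite time, and $d$-regularity gives you no control there, since it asserts transversality only \emph{away} from the axis; if instead you take the natural field, the projection of $\nabla\rho$ to $T(V_i^*)$, it does extend smoothly across $V\setminus\{0\}$ (because $V_i^*\setminus\{0\}$ is smooth), but it is then not tangent to $V$, so its flow can cross $V$ and need not preserve $V_i^*\setminus V$. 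Either way the product structure is not yet established. Two standard repairs: (a) invoke the local conic structure theorem for the \emph{pair} $(V_i^*,V)$, which gives a homeomorphism of $(V_i^*\cap\B_\e^m,\,V\cap\B_\e^m)$ with the cone on $(V_i^*\cap\BS_\e^{m-1},\,K)$, and hence $(V_i^*\setminus V)\cap\B_\e^m\cong K_i^*\times(0,1]$, which suffices for the topological independence claimed; or (b) construct a vector field on all of $V_i^*\cap(\mathring{\B}_\e^m\setminus\{0\})$ that raises $\rho$ \emph{and} is tangent to $V$ along $V$ (possible because $V\setminus\{0\}$ is itself transverse to all small spheres), patched by a partition of unity; its flow preserves the pair, every trajectory reaches $\BS_\e^{m-1}$, and your decomposition follows. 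In both repairs the essential missing input is smoothness and transversality of $V_i^*$ along $V\setminus\{0\}$, which comes from Milnor's isolated-critical-point result and the choice of small $\e$, not from $d$-regularity.
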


This motivated them to make the following question: Is the topology of $V_i^*$ independent of $i$ as in the complex case? They pointed that if this is true, then the link of $V_i^*$ is diffeomorphic to the double of the Milnor fiber of $f$.

\medskip
Using Corollary \ref{cor_2} we can easily answer that question. We have:

\begin{theo}
Let $f = (f_1, \dots ,f_n) : (\R^m,0) \to (\R^n,0)$, with $1<n<m$, be a map-germ with an isolated critical point. For each set of indices $\{ i_1, \dots, i_k \}$ set: 
$$V_{i_1, \dots, i_k} := (f_{i_1, \dots, i_k})^{-1}(0) \cap \B_\e^m \, $$
where $f_{i_1, \dots, i_k} = (f_{i_1}, \dots, f_{i_k}): (\R^m,0) \to (\R^k,0)$ and $\e$ is a Milnor ball for $f$. Then the topology of $V_{i_1, \dots, i_k}$ is independent of the choice of the indices $i_1, \dots, i_k$. Moreover, the diffeomorphism type of its link is also independent of the choice of the indices $i_1, \dots, i_k$, and it is diffeomorphic to the boundary of the product of the Milnor fiber $F$ of $f$ and a closed $(n-k)$-disk. In the particular case when $k=n-1$, that is precisely the double of $F$.
\end{theo}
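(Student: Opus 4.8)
The plan is to reduce the statement to the maps $g := f_{i_1, \dots, i_k} = (f_{i_1}, \dots, f_{i_k}) \colon (\R^m, 0) \to (\R^k, 0)$, obtained from $f$ by deleting the $n-k$ coordinate functions $f_j$ with $j \notin \{i_1, \dots, i_k\}$. First I would note that an isolated critical point forces an isolated critical value together with Milnor's condition $(b)$, so $f$ has a L\^e-Milnor fibration in the tube. Iterating the implication $(i) \Rightarrow (ii)$ of Theorem \ref{theo_main} exactly as in the proof of Corollary \ref{cor_2}, each $g$ again has an isolated critical value and a L\^e-Milnor fibration in the tube; moreover Corollary \ref{cor_2} identifies the Milnor fiber $F_g = F_{i_1, \dots, i_k}$ with $F \times \B_\eta^{n-k}$ up to diffeomorphism (corners rounded).

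The geometric core is the claim that, for each such $g$, the link $K_g := g^{-1}(0) \cap \BS_\e^{m-1}$ of the special fiber is diffeomorphic to the boundary $\partial F_g$ of the Milnor fiber. I would consider the portion of the Milnor tube lying on the sphere, $L := g^{-1}(\B_\eta^k) \cap \BS_\e^{m-1}$, together with the restriction $g|_L \colon L \to \B_\eta^k$. Away from the link, $g|_L$ is a submersion because the fibration hypothesis forces the fibers of $g$ to meet $\BS_\e^{m-1}$ transversally; along $K_g$ it is a submersion as well, since for small $\e$ the variety $g^{-1}(0)$ meets $\BS_\e^{m-1}$ transversally (the conical structure guaranteed by condition $(b)$), and at a regular point $x$ of $g$ the transversality $\ker dg_x + T_x \BS_\e^{m-1} = T_x \R^m$ is precisely the surjectivity of $dg_x|_{T_x \BS_\e^{m-1}}$. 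Shrinking $\eta$, the proper map $g|_L$ is therefore a submersion over a connected neighborhood of $0$ in its image, so Ehresmann's lemma turns it into a locally trivial fibration whose fiber over $0$ is $K_g$ and whose fiber over a regular value $t$ is $\partial F_g$. Hence $K_g \cong \partial F_g \cong \partial(F \times \B_\eta^{n-k})$.

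It remains to assemble the conclusion. Since $\partial(F \times \B_\eta^{n-k})$ depends only on $F$ and on $n-k$, the diffeomorphism type of the link of $V_{i_1, \dots, i_k}$ is independent of the chosen indices; and by the local conical structure theorem $V_{i_1, \dots, i_k} = g^{-1}(0) \cap \B_\e^m$ is homeomorphic to the cone on its link, so its topological type is independent of the indices as well. This proves the first two assertions and gives the identification of the link with $\partial(F \times \B_\eta^{n-k})$. For the last assertion, when $k = n-1$ one has $n-k = 1$, so $\partial(F \times \B_\eta^1) = (F \times \{\pm\eta\}) \cup (\partial F \times \B_\eta^1)$ is, after rounding corners, two copies of $F$ glued along $\partial F$, i.e. the double of $F$. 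The main obstacle is the middle step $K_g \cong \partial F_g$: it is the only place where one must upgrade the fibration away from the singular fiber to genuine control along it, and this hinges entirely on the transversality of $g^{-1}(0)$ with the small spheres, which is supplied for free by Milnor's condition $(b)$.
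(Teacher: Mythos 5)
Your proposal is correct and follows essentially the same route as the paper's proof: deduce the L\^e-Milnor fibration from the isolated critical point, apply Corollary \ref{cor_2} to identify $F_{i_1,\dots,i_k}$ with $F\times\B_\eta^{n-k}$, identify the link $K_{i_1,\dots,i_k}$ with $\partial F_{i_1,\dots,i_k}$, and conclude for $V_{i_1,\dots,i_k}$ itself via the local conical structure. The only difference is that the paper obtains the link--boundary identification by noting $\Sigma(f_{i_1,\dots,i_k})\subseteq\Sigma(f)=\{0\}$, so that $f_{i_1,\dots,i_k}$ has an isolated critical \emph{point}, and then quoting the resulting standard fact, whereas you prove it by an Ehresmann argument on $g^{-1}(\B_\eta^k)\cap\BS_\e^{m-1}$; that argument is fine, but note that the transversality of $g^{-1}(0)\setminus\{0\}$ with small spheres which it hinges on is supplied by this same isolated-critical-point observation (smoothness of the zero set away from $0$ plus Milnor's curve-selection argument), not by Milnor's condition $(b)$, which constrains $\Sigma_{g,\rho}\setminus V$ rather than $V$ itself.
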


\begin{proof}
Since $f$ has an isolated critical point, it has a L\^e-Milnor fibration:
$$f_|: f^{-1}(\B_\eta^n \backslash \{0\}) \cap \B_\e^m \to \B_\eta^n \backslash \{0\} \, .$$

So Corollary \ref{cor_2} gives that the Milnor fiber $F_{i_1, \dots, i_k}$ of $f_{i_1, \dots, i_k}$ is diffeomorphic to the product $F \times \B_\eta^{n-k}$. In particular, its diffeomorphism type is independent of the choice of the indices $i_1, \dots, i_k$.

But $\Sigma(f_{i_1, \dots, i_k}) \subset \Sigma(f)$ and then $f_{i_1, \dots, i_k}$ has an isolated critical point. Hence its link $K_{i_1, \dots, i_k} := V_{i_1, \dots, i_k} \cap \BS_\e^{m-1}$ is diffeomorphic to the boundary of $F_{i_1, \dots, i_k}$, which is diffeomorphic to the boundary of $F \times \B_\eta^{n-k}$. So the diffeomorphism type of $K_{i_1, \dots, i_k}$ is independent of the choice of the indices $i_1, \dots, i_k$.

Since $V_{i_1, \dots, i_k}$ is homeomorphic to the cone over $K_{i_1, \dots, i_k}$ we also have that the homeomorphism type of $V_{i_1, \dots, i_k}$ is independent of the choice of $i_1, \dots, i_k$.

\end{proof}

\vspace{0.5cm}


\begin{thebibliography}{99}


\bibitem{SCG} J.L. Cisneros-Molina, J. Seade and N.G. Grulha Jr., {\it On the topology of real analytic maps}, Int. J. of Math., Vol. 25, No. 7 (2014).

\bibitem{CSS1} J.L. Cisneros-Molina, J. Seade and J. Snoussi, {\it Refinements of Milnor's fibration theorem for complex singularities}, Adv. Math., 222(3): 937-970, 2009. 

\bibitem{CSS2} J.L. Cisneros-Molina, J. Seade and  J. Snoussi, {\it Milnor fibrations and $d$-regularity}, International J. Maths. 21 (2010), p. 419-434.

\bibitem{CSS3} J.L. Cisneros-Molina, J. Seade and  J. Snoussi, {\it Milnor fibrations and the concept of d-regularity for analytic map germs}, Contemporary Mathematics, A. M. S., v. 569, 2012.

\bibitem{DS} N. Dutertre and R.N.A. Santos, {\it Topology of real Milnor fibrations for non-isolated singularities}, International Mathematics Research Notices, first published online October 14, 2015, DOI 10.1093/imrn/rnv286.

\bibitem{ER} R. Ehrenborg and M. Readdy, {\it Manifold arrangements}, Journal of Combinatorial Theory, Series A, Volume 125, July 2014, Pages 214-239.

\bibitem{GZ} S.M. Gusein-Zade, {\it Integration with respect to the Euler characteristic and its applications}, Russian Math. Surveys 65 (2010), 399-432.

\bibitem{HL} H.V. H\`a and D.T. L\^e, {\it Sur la topologie des polyn\^omes complexes}, Acta Math. Vietnam. 9 (1984), no. 1, 21-32.

\bibitem{Ha} H.A. Hamm, {\it On the Euler characteristic of real Milnor fibres}, Proc. of 12th International Workshop on Singularities, S\~ao Carlos 2012, Journal of Singularities, Vol. 10 (2014), 147-156.

\bibitem{Ja} A. Jacquemard, {\it On the fiber of the compound of a real analytic function by a projection}, Bollettino dell'Unione Matematica Italiana, serie 8, volume 2-B (1999), n. 2, p. 263-278.

\bibitem{JT} C. Joita and M. Tibar, {\it Bifurcation values of families of real curves}, 	arXiv:1403.4808. 

\bibitem{Ki} H.C. King, {\it Some Topological Properties of Critical Points of Real Functions}, PhD Thesis, Berkeley, 1974.

\bibitem{Ma} D. Massey, {\it Real Analytic Milnor Fibrations and a Strong Lojasiewicz Inequality}, Real and complex singularities, London Math. Soc. Lecture Note Ser., 380, Cambridge Univ. Press, Cambridge (2010) 268-292.

\bibitem{Mi} J.W. Milnor, {\it Singular points of complex hypersurfaces}, Ann. of Math. Studies 61, Princeton, 1968.

\bibitem{SDD} R. Araujo dos Santos, D. Dreibelbis, and N. Dutertre, {\it Topology of the real Milnor fiber for isolated singularities}, Real and complex singularities, pp. 67-75, Contemp. Math., Vol. 569, Amer. Math. Soc., Providence, RI, 2012.

\bibitem{Su} M. Suzuki, {\it Propri\'et\'es topologiques des polyn\^omes de deux variables complexes, et automorphismes alg\'ebriques de l'espace $\C^2$}, J. Math. Soc. Japan 26 (1974), 241-257.

\bibitem{Ve} J.L. Verdier, {\it Stratifications de Whitney et th\'eor\`eme de Bertini-Sard}, Inv. Math. 36 (1976), 295-312.




\end{thebibliography}
\end{document}